\numberwithin{equation}{section}
\numberwithin{figure}{section}
\theoremstyle{plain}
\newtheorem{thm}{\protect\theoremname}[section]
 \theoremstyle{remark}
  \newtheorem{rem}[thm]{\protect\remarkname}
  \theoremstyle{plain}
  \newtheorem{cor}[thm]{\protect\corollaryname}
  \theoremstyle{plain}
  \newtheorem{lem}[thm]{\protect\lemmaname}
  \theoremstyle{plain}
  \newtheorem{prop}[thm]{\protect\propositionname}
    \theoremstyle{definition}
  \newtheorem{dfn}[thm]{\protect\definitionname}
   \theoremstyle{remark}
  \theoremstyle{plain}
  \newtheorem{expl}[thm]{\protect\examplename}
  \providecommand{\corollaryname}{Corollary}
  \providecommand{\examplename}{Example}
  \providecommand{\lemmaname}{Lemma}
  \providecommand{\propositionname}{Proposition}
  \providecommand{\remarkname}{Remark}
\providecommand{\theoremname}{Theorem}
\providecommand{\Algorithmname}{Algorithm}
  \providecommand{\definitionname}{Definition}
\DeclareMathOperator{\GL}{GL}
\DeclareMathOperator{\Spec}{Spec}
\DeclareMathOperator{\sgn}{sgn}
\DeclareMathOperator{\hes}{Hess}
\DeclareMathOperator{\dg}{diag}
\DeclareMathOperator{\Ad}{Ad}
\def\fg{{\mathfrak g}}
\def\fgl{{\mathfrak gl}}
\def\ffb{{\mathfrak b}}
\def\fft{{\mathfrak t}}
\def\k{{\mathbb C}}
\title{On the T-equivariant cohomology of Hessenberg varieties}
\author{Daniel S\'anchez Arg\'aez}
\address{
Departamento de Matem\'aticas,
Universidad Aut\'onoma Metropolitana-I,
09340 Ciudad de M\'exico,  M\'exico}
\email{dasa@xanum.uam.mx}
\author{Felipe Zald\'{\i}var}
\address{
Departamento de Matem\'aticas,
Universidad Aut\'onoma Metropolitana-I,
09340 Ciudad de M\'exico,  M\'exico}
\email{fz@xanum.uam.mx}
\date{}
\keywords{Flag varieties, Hessenberg variety, equivariant cohomology}
\subjclass[2010]{14M15, 14M12, 14N15}
\begin{document}

\begin{abstract}
For an endomorphism $s:V\rightarrow V$ of a finite dimensional complex vector space and an action of a torus $T$ on the full flag variety $\GL_n(\k)/B$, we give a description of its fixed point set when $s$ is semisimple or regular nilpotent. We also compute the one dimensional orbits of this action on the Hessenberg subvariety $\hes(s,h)\subseteq \GL_n(\k)/B$ for any Hessenberg function $h$. For the action of the one dimensional torus $S$ and a  regular nilpotent endomorphism $N:V\rightarrow V$, we give a new computation of  the equivariant cohomology of the Hessenberg variety  $\hes(N,h)$ for any Hessenberg function using determinantal conditions. 
\end{abstract}
\maketitle

\section{Introduction}
Let $V$ be a $\k$- vector space of finite dimension $\dim V=n$, and let $\GL_n(\k)/B$ denote the full flag variety, where $B$ is the subgroup of upper-triangular matrices. Let $\fg$, $\ffb$ denote the Lie algebras corresponding to $\GL_n(\k)$ and $B$ respectively. For $X\in\fg$ and $H\subset \fg$ a $\ffb$-submodule such that $\ffb\subset H$, consider the \textit{Hessenberg variety}
$$\hes(X,H)=\lbrace gB\in \GL_n(\k)/B: \Ad(g^{-1})\cdot X\in H\rbrace,$$
where $\Ad: \GL_n(\k) \rightarrow \GL_n (\fg)$ is the adjoint representation. There is another description $\hes(X,h)$ of a Hessenberg variety using a \textit{Hessenberg function} $h:\{1,\ldots,n\}\rightarrow \{1,\ldots,n\}$ (see Section \ref{sec2}) with $\hes(X,h)=\hes(X,H)$ for $H$ a subspace that depends on $h$.  We consider the $n$-dimensional torus $T\subseteq B$ given by the diagonal matrices and its action on $\GL_n(\k)/B$ by left multiplication. Let $\fft$ denote its Lie algebra. By Tymoczko \cite[Proposition 2.8]{Tym2} there is a description of the fixed point set $(\GL_n(\k)/B)^T$ and the one dimensional orbits associated to the action of $T$. Goresky-MacPherson in \cite{GM}  give a description $H_{S}^*(Y;\k)$ for $Y$ a $GM$-space and a one-dimensional torus $S\simeq \k^*$. Since $\GL_n(\k)/B$ is a $GM$-space,  in \cite{phd} Insko  computes $H_{S}^*(\GL_n(\k)/B;\k)\simeq \k[x_1,\ldots,x_n,t]/I_{1}$ where $I_1$ is an ideal that gives a combinatorial characterization of the set $(\GL_n(\k)/B)^S$. 
Assuming that $N$ is nilpotent and that $H_{S}^*(\hes(N,h);\k)$ is generated by its  cohomology classes of degree two, then $\hes(N,h)$ is a $GM$-subspace of $\GL_n(\k)/B$ and therefore $$H_{S}^*(\hes(N,h);\k)\simeq H_{S}^*(\GL_n(\k)/B;\k)/I$$ 
for some ideal $I$. In \cite[Theorem 5.11]{phd} Insko defines  an ideal $I_h\in \k[x_1,\ldots,x_n,t]$ using \textit{Hessenberg diagrams} to describe the ideal $I$ as $I=I_1+I_h$ and computes $H_{S}^*(\hes(N,h);\k)$.

Since $\hes(X,h)$ is a closed subvariety of $\GL_n(\k)/B$, our contribution starts by
considering which of the above properties for the full flag variety are inherited to $\hes(X,h)$. If $X$ is a semisimple operator $X=\dg(\lambda_1, \ldots,\lambda_n)$ the action of $T$ restricts to an action on $\hes(X,h)$ by Theorem \ref{teo3.1} and $(\hes(X,h))^T\subseteq (\GL_n(\k)/B)^T$.  Theorem \ref{teo2}  identifies $(\hes(X,h))^T=(\GL_n(\k)/B)^T$ and the one dimensional orbits in $\hes(X,h)$. On the other hand, if $N$ is a regular nilpotent operator, there are some similar results as in the semisimple case although the action of $T$ does not restrict to an action on $\hes(N,h)$.  In Theorem \ref{teo4} we compute which fixed points of $\GL_n(\k)/B$ are contained in $\hes(N,h)$. 
Now, if instead of an $n$-torus we take a one dimensional torus $S=\lbrace \dg(t, t^2, \ldots, t^n):t\in \k^*\rbrace$, then $\hes(N,h)$ is $S$-invariant under the $S$ action on $\GL_n(\k)/B$ and we identify $(\hes(N,h))^S$ in Corollary \ref{cor4.5}. Also, Theorem \ref{teo4} allow us to give another description of the fixed points of $\hes(N,h)$ using an ideal $\tilde{I}_h\subset \k[x_1,\ldots,x_n]$. We define $\tilde{I}_h$  by monomials whose zeros contain the values of a permutation $w$ in $S_n$. Indeed, if $\overline{S_n}=\lbrace \overline{w}=(w(1),\ldots, w(n)): w\in S_n\rbrace$, then $\overline{w}\in V(\tilde{I}_n)\cap\overline{S}_n$ if and only if $wB\in \hes(N,h)^S$, where $V(\tilde{I}_n)$ is the (projective) zero-set of the ideal $\tilde{I}_n$. This description of $\hes(N,h)^S$ via $\tilde{I}_h$ allow us to identify $V(\tilde{I}_h)\cap\overline{S}_n$ with $V(I_1+I_h)$
under  a suitable evaluation and therefore to compute its $S$-equivariant cohomology  $H_{S}^*(\hes(N,h);\k)$ using $\tilde{I}_h$. 

The paper is organized as follows: 
Section \ref{sec2} recalls two basic definitions of the Hessenberg variety and contains a description of the defining ideal of $\hes(X,H)$ due to Insko \cite[Theorem 10]{Insko}.  We give a simpler proof of this theorem using determinantal conditions. In addition, in this section we reinterpret some results of GKM-theory applied to  an $n$-torus $T\subset B$ acting on the full flag variety and we compute the resulting moment graph.  

For the natural action of a full dimensional algebraic torus $T\subseteq B$ on the flag variety $\GL_n(\k)/B$, in Section \ref{sec3} we start by recalling how this action restricts to an action of $T$ on the Hessenberg closed subvariety of $\GL_n(\k)/B$  
in Proposition \ref{prop3.1}.  Theorems \ref{teo3.1} and \ref{teo2} describe the one-dimensional orbits of the action of $T$ on $\hes(X,H)$. 

Section \ref{sec4} considers a regular nilpotent operator $N$, with corresponding Hessenberg variety $\hes(N,h)$ for $h$ any Hessenberg function. In this section we continue considering the action of $T$ on $\GL_n(\k)/B$. We analyze  when a fixed point of $\GL_n(\k)/B$ under $T$ belongs to $\hes(N,h)$ although the action of $T$ does not restrict to $\hes(N,h)$. Theorem \ref{teo4} characterizes the flags $[w]=wB\in (\GL_n(\k)/B)^T$  such that $[w]\in\hes(N,h)$ for $w\in S_n$.

Finally,  in Section \ref{sec5} we consider the  action of $S=\lbrace \dg(t, t^2, \ldots, t^n):t\in \k^*\rbrace$ on $\GL_n(\k)/B$. In this case, $\hes(N,h)$ is an $S$-invariant subvariety and Corollary \ref{cor4.5} describes the fixed point set $\hes(N,h)$. This section includes a Corollary \ref{cor4.6}, a consequence of \cite[Theorem 4.14]{phd} that computes the $S$-equivariant class of $\hes(N,h)$. Also, once the ideal $\tilde{I}_h$ is defined using Corollary \ref{cor4.5}  we give a new proof of \cite[Theorem 5.11]{phd}.

\section{Preliminaries}\label{sec2}
 
\begin{dfn} 
Let $V$ be a complex vector space  of finite dimension $\dim V=n$. 
A \textit{full flag} $V_{\bullet}:= V_1 \subsetneq V_2 \subsetneq \ldots \subsetneq V_{n-1}\subsetneq V$  is a sequence of  nested subspaces such that $\dim V_i=i$. The collection of all full flags in $V$ 
 is a projective variety,  indeed a determinantal variety. Choosing a basis of $V\simeq{\mathbb C}^n$, 
 there is an isomorphism between the full flag variety and the homogeneous space $\GL_n({\mathbb C})/B$, where $B$ is the subgroup of upper-triangular matrices.
\end{dfn}

For $V={\mathbb C}^n$ and $g\in \GL_n({\mathbb C})$, let $V_{\bullet}(g)$ be the full flag where each subspace is the span $V_j (g)$ of the first $j$ column vectors of $g$. 
Let $S_n$ be the group of permutations of $n$ letters and fix the standard basis $\left\lbrace e_1,\cdots, e_n\right\rbrace$ of $V$. For $w$ a permutation in $S_n$ viewed as a permutation matrix $w\in \GL_n({\mathbb C})$ we let $[w]\in \GL_n({\mathbb C})/B$ denote the flag given by $we_i=e_{w(i)}$. For the transposition  $s_{j,k}\in S_n$  that interchanges $j$ and $k$ and 
for $0\neq c\in {\mathbb C}$, let $G_{j,k}(c)\in \GL_n({\mathbb C})$ be the matrix given by 
\[
{(G_{jk}(c))}_{il}=
\begin{cases}
1 & \text{ if } i=l, \\
c & \text{ if } i=j \text{ and } k=l,\\ 
0 & \text{ otherwise.}
\end{cases}
\]
For an $n$-torus $T\subset B$ acting on $\GL_n(\k)/B$  by left multiplication we are interested on the $T$-equivariant cohomology of the flag variety $\GL_n(\k)/B$. GKM theory gives techniques for computing the $T$-equivariant cohomology, and one useful result is given by Tymoczko \cite{Tym2}: Let $T\subset B \subset \GL_n(\k)$ an $n$-torus, $T=\left\lbrace \dg(T_1,\cdots, T_n): T_i\in \k^{*}\right\rbrace$ and $\fg$, $\ffb$ and $\fft$ be Lie algebras corresponding to $\GL_n(\k)$, $B$, $T$ respectively, where $\fft=\left\lbrace (t_1,\cdots, t_n):t_i\in \k\right\rbrace$.
In \cite[Proposition 2.8]{Tym2},  Tymoczko  proved the next proposition:
\begin{prop}[Tymoczko {\cite[Proposition 2.8]{Tym2}}]\label{prop1} 
{\rm (1)} In $\GL_n(\k)/B$, the vertices of the moment graph  are the points $[w]\in \GL_n(\k)/B$ for $w\in S_n$.
\smallskip

\noindent{\rm (2)} 
If $j<k$, there is an edge of the moment graph  from $[w]$ to $[s_{j,k}w]$ if and only if $w^{-1}(j)>w^{-1}(k)$. The points of this edge correspond to the flags $[G_{j,k}(c)w]\in \GL_n(\k)/B$ with $c\neq 0$. This edge is labeled $t_j -t_k$.
\end{prop}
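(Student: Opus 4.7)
For part (1), the plan is to use the Bruhat decomposition $\GL_n(\k)=\bigsqcup_{w\in S_n}BwB$. On each cell $BwB/B$, which is an affine space, the $T$-action is linear and contracts to a unique fixed point $[w]$; equivalently, a flag $gB$ is $T$-fixed iff $g^{-1}Tg\subseteq B$, and since both $T$ and $g^{-1}Tg$ are maximal tori of $B$ they are $B$-conjugate, placing $g\in N_{\GL_n(\k)}(T)\cdot B$. The isomorphism $N_{\GL_n(\k)}(T)/T\cong S_n$ then produces exactly the coset representatives $[w]$, which gives the claimed vertex set.

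For part (2), I fix $w\in S_n$ and $j<k$ and analyze the curve $C=\{[G_{j,k}(c)w]:c\in\k^{*}\}\subseteq\GL_n(\k)/B$. The key algebraic identity is that for $t=\dg(T_1,\ldots,T_n)\in T$ one has $tG_{j,k}(c)t^{-1}=G_{j,k}\!\bigl((T_j/T_k)c\bigr)$, whence
$$t\cdot G_{j,k}(c)w=G_{j,k}\!\bigl((T_j/T_k)c\bigr)\,w\,(w^{-1}tw).$$
Because $w^{-1}tw\in T\subseteq B$, this descends to $t\cdot[G_{j,k}(c)w]=[G_{j,k}((T_j/T_k)c)\,w]$, so $C$ is $T$-invariant and $T$ acts on the parameter $c$ through the character $t_j-t_k$, which will be the label of the edge. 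Moreover $w^{-1}G_{j,k}(c)w=G_{w^{-1}(j),w^{-1}(k)}(c)$ is upper triangular (hence in $B$) iff $w^{-1}(j)<w^{-1}(k)$, so in that case $C$ collapses to $[w]$, whereas for $w^{-1}(j)>w^{-1}(k)$ the curve $C$ is genuinely one-dimensional.

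Under the assumption $w^{-1}(j)>w^{-1}(k)$, the remaining task is to identify the closure of $C$ as a $\p^{1}$ with endpoints $[w]$ and $[s_{j,k}w]$: the limit as $c\to 0$ is visibly $[w]$, while for $c\to\infty$ I rescale the $w^{-1}(k)$-th column $e_k+ce_j$ by $c^{-1}$ to obtain $e_j$ in the limit, and then compare position by position that the resulting flag coincides with $[s_{j,k}w]$; the subtle point is that at position $w^{-1}(j)$ the subspace $V_{w^{-1}(j)}$ equals $\spn(e_{w(1)},\ldots,e_{w(w^{-1}(j))})$ independently of $c$, so no incompatibility arises when passing to the limit. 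Finally, I would argue that this construction exhausts the edges of the moment graph by a weight count on the tangent space $\fg/\Ad(w)\ffb$ at $[w]$: its $T$-weights are exactly $\{t_a-t_b:w^{-1}(a)>w^{-1}(b)\}$, giving $\binom{n}{2}$ pairwise non-proportional characters, each accounting for at most one one-dimensional orbit through $[w]$, and these are in bijection with the pairs $(j,k)$, $j<k$, $w^{-1}(j)>w^{-1}(k)$ produced by the construction. The main obstacle is this last exhaustion step, together with making the $c\to\infty$ limit rigorous inside the flag variety, where the individual column vectors blow up and must be normalized before passing to the limit in the relevant Grassmannians.
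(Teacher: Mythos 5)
The paper offers no proof of this proposition: it is quoted directly from Tymoczko \cite[Proposition 2.8]{Tym2}, so there is no internal argument to compare yours against, and your write-up should be judged on its own terms. It is essentially correct and follows the standard route. For (1), characterizing fixed flags by $g^{-1}Tg\subseteq B$ and invoking conjugacy of maximal tori in the solvable group $B$ is sound. For (2), the identities $tG_{j,k}(c)t^{-1}=G_{j,k}\bigl((T_j/T_k)c\bigr)$ and $w^{-1}G_{j,k}(c)w=G_{w^{-1}(j),w^{-1}(k)}(c)$ do show that the curve is $T$-stable, is a single orbit on which $T$ acts through the character labelled $t_j-t_k$, and collapses to $[w]$ exactly when $w^{-1}(j)<w^{-1}(k)$; and your handling of the $c\to\infty$ limit (projectivizing the column $e_k+ce_j$ and checking the partial flags separately in the ranges $m<w^{-1}(k)$, $w^{-1}(k)\le m<w^{-1}(j)$, $m\ge w^{-1}(j)$) correctly identifies the second endpoint as $[s_{j,k}w]$. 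The one imprecision is in the exhaustion count at the end: the pairs $(j,k)$ with $j<k$ and $w^{-1}(j)>w^{-1}(k)$ number $\mathrm{inv}(w^{-1})$, not $\binom{n}{2}$, so they are not by themselves in bijection with the $\binom{n}{2}$ pairwise non-proportional tangent weights at $[w]$. The missing orbits through $[w]$ are those for which $[w]$ is the $c\to\infty$ endpoint, namely the curves $O_{s_{j,k}w,\,w}$ attached to pairs with $w^{-1}(j)<w^{-1}(k)$; together the two families are indexed by unordered pairs $\{j,k\}$ and match the weight count, so each tangent weight is accounted for and no further edges exist. With that small repair the argument is complete.
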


\begin{dfn}
A \textit{Hessenberg function} $h$ is a function $h:\left\lbrace 1,2,\cdots, n\right\rbrace \rightarrow \left\lbrace 1,2,\cdots, n\right\rbrace $ such that $j\leq h(j)$ for all $1\le j \leq n$ and $h(j)\leq h(j+1)$ for all $1\leq j\leq n-1$. For a fixed element $X$ in the Lie algebra $\fgl_n$ of $\GL_n(\k)$, the corresponding \textit{Hessenberg variety} is the flag variety
\begin{equation}\label{eq2.1}
\hes(X,h)=\left\lbrace gB\in G/B: XV_{j}(g)\subseteq V_{h(j)}(g)\right\rbrace.
\end{equation}
\end{dfn}
C. Procesi \cite{Proc} has a more general definition of a Hessenberg variety:
Let $G$ be a semisimple reductive algebraic group over $\k$, $B\subseteq G$ a Borel subgroup,  $\fg$  the Lie algebra of $G$ and   $\ffb\subseteq \fg$ the Lie subalgebra corresponding to $B$. A \textit{Hessenberg space} is a $\ffb$-submodule  $H\subseteq\fg$ such that $\ffb\subseteq H$. Given a Hessenberg  space $H$ and a fixed element $X\in \fg$, the corresponding \textit{Hessenberg variety} is the subvariety of $G/B$ defined by
\begin{equation}\label{eq2.2.0}
  \hes (X,H)=\left\lbrace gB\in G/B : \Ad (g^{-1})\cdot X\in H\right \rbrace,
\end{equation}
where $\Ad:G\rightarrow\GL(\fg)$ is the adjoint representation of $G$.
When  $G=\GL_n(\k)$  definition \eqref{eq2.2.0} is equivalent to \eqref{eq2.1}  noticing that for $\fg=\fgl_n$ and $\ffb=\ffb_n$ the Borel subalgebra of upper triangular matrices,  a 
Hessenberg function $h$ defines the Hessenberg space
$$H_h=\ffb \oplus \bigoplus_{j\leq i\leq h(j)} \fg_{e_i-e_j}=\big\{(x_{ij})\in\fgl_n:x_{ij}=0\;\text{for all $i>h(j)$}\big\},$$
and for an element $X\in\fgl_n$, using that for $G=\GL_n(\k)$ the adjoint representation is given by conjugation there is an equality $\hes(X,h)=\hes(X, H_h)$ as sets and as varieties as shown by
 E. Insko, J. Tymoczko and A. Woo \cite[Proposition 8]{Insko}. In \cite[Theorem 10]{Insko} there is a description of the defining ideal of the Hessenberg variety as
\begin{equation}\label{eq2.2}
I_{X, H_h}=\left\langle d_{(1,\ldots, n)(u_1,\ldots,u_{i-1}, Xu_j, u_{i+1},\ldots, u_n)}: i>h(j)\right\rangle
\end{equation}
where $ u_1, \cdots, u_n$ are the column vectors of  
a generic $n\times n$ matrix $Z=(z_{ij})$ 
and $d_{(1,\ldots, n)(u_1,\ldots,u_{i-1}, Xu_j, u_{i+1},\ldots, u_n)}$ is the determinant of the matrix with columns 
$u_1,\ldots,u_{i-1}$, $Xu_j, u_{i+1},\ldots, u_n$, namely $d\in R=\k[z_{ij}]$. Since we are considering flags, then $d\neq 0$ and $I_{X,H_h}\subset \k[z_{ij},d^{-1}]$. We start by giving a different description of $I_{X,H_h}$ and to do this we start by giving a description of $\hes(X,h)$. Given $gB\in G/B$, let $v_1,\ldots,v_n$ be the columns of $g$ and
 $V_{\bullet}(g)$  the flag defined by $gB$; observe that 
\begin{align}\label{eq2.3}
\begin{split}
V_{\bullet}(g)\in \hes(X,h) &\Leftrightarrow X\cdot V_j(g) \subseteq V_{h(j)}(g) \text{ for all $1\leq j < n$} \\
& \Leftrightarrow Xv_1, \ldots, Xv_j \in V_{h(j)}(g)\\
& \Leftrightarrow \left\langle v_1, \ldots v_{h(j)}, Xv_1,\ldots, Xv_j\right\rangle\! =\! V_{h(j)}  \text{ for all $1\leq j < n$}.
\end{split}
\end{align}
This is equivalent to say that every size $h(j)+1$ minor of the matrix whose columns are the vectors $\left\lbrace v_1,\cdots, v_{h(j)}, Xv_1, \cdots, Xv_j\right\rbrace$ vanish. Now, for $l<j$ then $h(l)\leq h(j)$, so $V_{h(l)}(g)\subseteq V_{h(j)}(g)$. Then, for $V_{\bullet}(g)\in \hes(X,h)$ and for each $l$ and $l+1$ we have that $X V_l(g)\subseteq V _{h(l)}(g)\subseteq V_{h(l+1)}(g)$. Therefore, the inclusion 
$XV_{l+1}(g) \subseteq V _{h(l+1)}(g)$  is determined by the condition  $Xv_{l+1}\in V_{h(l+1)}(g)$. Hence,
\begin{align}\label{eq2.4}
\begin{split}
X\cdot V_j(g) \subseteq V_{h(j)}(g)  &\Leftrightarrow \left\langle v_1, \ldots v_{h(j)}, Xv_1,\ldots, Xv_j\right\rangle =V_{h(j)}(g)\\
&\Leftrightarrow Xv_j\in V_{h(j)}(g)\text{ for all $1\leq j < n$}.
\end{split}
\end{align}
The last condition implies that every size $h(j)+1$ minor of the matrix formed by  the columns $\left\lbrace v_1,\ldots, v_{h(j)}, Xv_j\right\rbrace $  vanish and hence the sets 
\begin{equation}\label{eq2.5}
\qquad B_{j,k}=\left\lbrace v_1, \ldots v_{h(j)}, Xv_j, v_{h(j)+1},\ldots \widehat{v_{k}}, \ldots v_n\right\rbrace 
\end{equation}
are linearly dependent, where $v_1, \cdots, v_n$ are the column vectors of the matrix $g$ 
and $\widehat{v_{k}}$ means removing $v_k$. Reciprocally, if
 for all $1\leq j\leq n-1$ and for all $k$ such that $h(j)+1\leq k\leq n$ all sets \eqref{eq2.5} are linearly dependent
then \eqref{eq2.4} is consequence of the following elementary lemma:
\begin{lem}\label{lem1.1}
Let $V$ be a complex vector space of finite dimension  $n$,  $B=\{v_1,\ldots,v_n\}$ a basis of $V$ and $v\in V$ non zero such that $v\neq v_j$ for all $1\leq j\leq n$. Then, for a fixed $j$,  $v\in V_j=\left\langle v_1,\cdots, v_j\right\rangle$ if and only if for all $k$ with $j+1\leq k\leq n$ the sets $B_{j,v,k}=(B- \{v_k\})\cup \{v\}$ are linearly dependent.\qed
\end{lem}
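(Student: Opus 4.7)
The plan is to argue via coordinates in the basis $B$. Write $v = \sum_{i=1}^{n} a_i v_i$; this expression is unique because $B$ is a basis. The statement $v \in V_j$ is then equivalent to $a_i = 0$ for every $i > j$, so the strategy is to translate each linear dependence of $B_{j,v,k}$ into information about the coefficient $a_k$.

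For the forward direction, I would assume $v \in V_j$ so that $v = \sum_{i=1}^{j} a_i v_i$. For any $k$ with $j+1 \leq k \leq n$, the vectors $v_1,\ldots,v_j$ all belong to $B_{j,v,k} = (B \setminus \{v_k\}) \cup \{v\}$ (because $k > j$), so the relation $v - \sum_{i=1}^{j} a_i v_i = 0$ is a nontrivial linear dependence among the elements of $B_{j,v,k}$.

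For the backward direction, I would fix $k$ with $j+1 \leq k \leq n$ and pick scalars $b_1,\ldots,\widehat{b_k},\ldots,b_n,c$, not all zero, witnessing the linear dependence of $B_{j,v,k}$, i.e.
\[
c\,v + \sum_{i \neq k} b_i v_i = 0.
\]
The key observation is that $c \neq 0$: otherwise the remaining $b_i$ would yield a nontrivial dependence among $\{v_i : i \neq k\}$, contradicting that this is a subset of a basis. Dividing by $c$ exhibits $v$ as a linear combination of $\{v_i : i \neq k\}$, and by uniqueness of the expansion of $v$ in the basis $B$ this forces $a_k = 0$. Running this for every $k > j$ gives $v \in V_j$, concluding the proof.

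There is no real obstacle here; the lemma is elementary and its only subtle point is that the coefficient $c$ of $v$ in the dependence relation must be nonzero, which is forced by the linear independence of the subset $B \setminus \{v_k\}$ of the basis. The hypothesis $v \neq v_i$ for all $i$ and $v \neq 0$ is not actually used in either direction, but it harmlessly rules out the trivial cases where $B_{j,v,k}$ would fail to be a list of $n$ distinct nonzero vectors.
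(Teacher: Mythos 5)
Your proof is correct and complete. The paper states this lemma with a \qed and no argument, treating it as elementary, so there is nothing to compare against; your coordinate computation, whose only nontrivial point is that the coefficient of $v$ in any dependence relation on $B_{j,v,k}$ must be nonzero because $B\setminus\{v_k\}$ is linearly independent, is exactly the standard argument that fills this gap. One small correction to your closing aside: the hypothesis $v\neq v_i$ \emph{is} used in the forward direction. If $v$ coincided with some $v_i$ with $i\leq j$, then for $k>j$ the set $B_{j,v,k}=(B\setminus\{v_k\})\cup\{v\}$ would collapse to $B\setminus\{v_k\}$, which is linearly independent even though $v\in V_j$; your claimed relation $v-\sum_{i\leq j}a_iv_i=0$ would then be the trivial one. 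So the hypothesis is what guarantees that this relation is a genuine nontrivial dependence among distinct elements of $B_{j,v,k}$ (whereas $v\neq 0$ really is dispensable).
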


Now, we note that the conditions \eqref{eq2.3}, \eqref{eq2.4} and \eqref{eq2.5} have associated 
equations that arise from certain determinants: Indeed,
let $u_1,\cdots, u_n$ denote the column vectors of a generic square $n\times n$ matrix $Z=(z_{ij})$ and for a linear operator $X:{\mathbb C}^n\rightarrow {\mathbb C}^n$ consider the column vectors $Xu_i$, for $1\leq i\leq n$. For a positive integer $m\leq n$
let $R=(R_1,\ldots,R_m)$ with $R_i\in \{1,\ldots,n\}$ and $R_1<\cdots<R_m$ and let
$C=\{C_1,\ldots,C_m\}$ an ordered set of column vectors in $\k^{n}$. Let
$d_{R,C}\in{\mathbb C}[z_{ij}]$ denote the $m\times m$ minor obtained by choosing the rows of the columns of $C$ according to $R$, that is, the determinant of the $m\times m$ matrix whose  $(i,j)$-th entry is the $R_i$-th entry of $C_j$ . Now, for a Hessenberg variety $\hes(X,h)$ associated to a Hessenberg function $h$ and an endomorphism $X\in\fgl_n$
consider the following ideals
\begin{align*}
J_{X,j,h(j)}&=\left\langle d_{R,C} :R\subset \left\lbrace 1, 2, \ldots n\right\rbrace \text{ and }C\subset \left\lbrace u_1, \ldots u_{h(j)}, Xu_1,\ldots, Xu_j\right\rbrace\right. \\
&\qquad\qquad\left.\text{such that }\;  |R|=|C|=h(j)+1\right\rangle,\\
J'_{X,j,h(j)+1}&=\left\langle d_{R,C}: R=\{1,\ldots,h(j)+1\}\text{, }C=\{u_1,\ldots u_{h(j)},Xu_j\}\right\rangle,
\end{align*}
and note that:
\begin{itemize}
    \item Condition \eqref{eq2.3} has associated the following ideal
    $$J_{X,h}=\sum_{j=1}^{n} J_{X,j,h(j)}, $$
    \item condition \eqref{eq2.4} has associated the following ideal
    $$J'_{X,h}=\sum_{j=1}^{n} J'_{X,j,h(j)+1}$$
    \item and condition \eqref{eq2.5} has associated the following ideal 
    $$I_{X,H_h}=\left\langle d_{(1,\ldots, n),(u_1,\ldots,u_{k-1},Xu_j,u_{k+1},\ldots, u_n)}:1\leq j\leq n \text{ y }  k>h(j)\right\rangle.$$ 
\end{itemize}
Finally,   by the equivalence of Conditions \eqref{eq2.3} and \eqref{eq2.4}, the ideals $J_{X,h}=J'_{X,h}$, and 
since $\hes(X,h)$ is a closed subvariety \cite{Proc} of the complete flag variety, by Lemma \ref{lem1.1} $J'_{X,h}$ and $I_{X,H_h}$  are ideals of $\k[z_{ij},d^{-1}]$, where $d$ is the determinant function.  
Since $J_{X,h}$ is the ideal defining $\hes(X,h)$ and $I_{X,H_h}$ is the defining ideal of $\hes(X,H_h)$, as ideals in $\k[z_{ij},d^{-1}]$,
the equivalences $\eqref{eq2.3}\Leftrightarrow \eqref{eq2.4} \Leftrightarrow \eqref{eq2.5}$ and Lemma \ref{lem1.1} 
gives another proof of:

\begin{thm}[{\cite[Theorem 10]{Insko}}]\label{thm1.1}
For any Hessenberg function $h$
$$I_{X,H_h}=J_{X,h}$$
as ideals in $\k[z_{ij},d^{-1}]$.
\qed
\end{thm}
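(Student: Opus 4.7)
The plan is to parlay the chain of equivalences $\eqref{eq2.3}\Leftrightarrow\eqref{eq2.4}\Leftrightarrow\eqref{eq2.5}$ already established in the discussion preceding the statement into an equality of ideals $J_{X,h}=J'_{X,h}=I_{X,H_h}$ in the localized ring $\k[z_{ij},d^{-1}]$, by upgrading the set-theoretic equivalences to polynomial identities via Cramer's rule and Laplace expansion of determinants. Working in this localization has the crucial effect of making $d$ a unit, so every appearance of Cramer's rule yields an identity among honest elements of $\k[z_{ij},d^{-1}]$ rather than only rational functions.

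First I would prove $J'_{X,h}=J_{X,h}$. The inclusion $J'_{X,h}\subseteq J_{X,h}$ is immediate by inspection of the generating determinants. For the reverse inclusion, since $h$ is nondecreasing one has $h(i)\le h(j)$ whenever $i\le j$; writing each $Xu_i$ in the basis $u_1,\ldots,u_n$ as $Xu_i=\sum_k c_{k,i}u_k$ with $c_{k,i}\in\k[z_{ij},d^{-1}]$ obtained from Cramer's rule, the generators of $J'_{X,i,h(i)+1}$ are (up to unit multiples) precisely the coefficients $\{c_{k,i}:k>h(i)\}$. Since $h(i)\le h(j)$, the identity $c_{k,i}=0$ for $k>h(j)$ forces every column of $[u_1,\ldots,u_{h(j)},Xu_1,\ldots,Xu_j]$ to lie in $\spn(u_1,\ldots,u_{h(j)})$, so each size-$(h(j)+1)$ minor, expanded by multilinearity in the $Xu_i$-columns, is a $\k[z_{ij},d^{-1}]$-linear combination of the $c_{k,i}$ with $i\le j$ and $k>h(j)$, hence of generators of $J'_{X,h}$.

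Second I would establish $J'_{X,h}=I_{X,H_h}$ by a direct computation. With the same notation $Xu_j=\sum_k c_{k,j}u_k$, Cramer's rule gives $c_{k,j}\cdot d=\pm d_{(1,\ldots,n),(u_1,\ldots,u_{k-1},Xu_j,u_{k+1},\ldots,u_n)}$, so since $d$ is a unit in the localization, $I_{X,H_h}$ is generated by $\{c_{k,j}:1\le j\le n,\;k>h(j)\}$. On the other hand, substituting $Xu_j=\sum_k c_{k,j}u_k$ into the last column of $[u_1,\ldots,u_{h(j)},Xu_j]$ and using multilinearity of the determinant in that column, each size-$(h(j)+1)$ minor becomes a $\k[z_{ij}]$-linear combination of $\{c_{k,j}:k>h(j)\}$ with coefficients that are size-$(h(j)+1)$ minors of the generic matrix $Z$; this gives $J'_{X,h}\subseteq I_{X,H_h}$. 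For the reverse containment one must recover each $c_{k,j}$ from the minors, which is done by choosing $n-h(j)$ row-subsets $R$ of cardinality $h(j)+1$ for which the resulting coefficient matrix (whose entries are size-$(h(j)+1)$ minors of $Z$) is invertible in $\k[z_{ij},d^{-1}]$.

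The main obstacle is this last inversion step: while the geometric assertion that $c_{k,j}=0$ iff the minors in $J'_{X,j,h(j)+1}$ all vanish is transparent, exhibiting each $c_{k,j}$ as an explicit $\k[z_{ij},d^{-1}]$-linear combination of those minors requires locating enough row-subsets $R$ so that the associated system of size-$(h(j)+1)$ minors of $Z$ (with rows indexed by $R$ and columns by $k>h(j)$) has determinant dividing a power of $d$. This can be arranged by taking row-subsets of the form $\{1,\ldots,\smash{\widehat{\,r\,}},\ldots,h(j)+1,r\}$ for $r>h(j)$ and invoking a Laplace-expansion identity relating the resulting minors to $d$; granting this, both inclusions in $J'_{X,h}=I_{X,H_h}$ close up, and combined with the first step this yields the theorem.
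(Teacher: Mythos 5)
Your argument is correct, but it takes a genuinely different route from the paper's. The paper proves the theorem set-theoretically: it shows that conditions \eqref{eq2.3}, \eqref{eq2.4} and \eqref{eq2.5} cut out the same locus of flags (via Lemma \ref{lem1.1}) and then identifies $J_{X,h}$, $J'_{X,h}$ and $I_{X,H_h}$ as defining ideals of that common locus. You instead work at the level of generators: invertibility of $Z$ over $\k[z_{ij},d^{-1}]$ gives $Xu_j=\sum_k c_{k,j}u_k$ with $c_{k,j}=\pm d^{-1}\,d_{(1,\ldots,n),(u_1,\ldots,u_{k-1},Xu_j,u_{k+1},\ldots,u_n)}$, and multilinearity of the determinant (together with the pigeonhole observation that a nonvanishing term in the expansion of a size-$(h(j)+1)$ minor must involve some $c_{k,i}$ with $k>h(j)\geq h(i)$) expresses each generator of one ideal as an explicit $\k[z_{ij},d^{-1}]$-combination of generators of the other. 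What your version buys is equality of the ideals themselves rather than of their zero sets --- the latter only yields equality of radicals via the Nullstellensatz, and the paper's route implicitly identifies each ideal with the full vanishing ideal of $\hes(X,h)$; your route needs no such identification. The one step you leave as an assertion --- recovering each $c_{k,j}$ with $k>h(j)$ from the size-$(h(j)+1)$ minors of $[u_1,\ldots,u_{h(j)},Xu_j]$ --- closes more cleanly than by hunting for invertible row-subsets: write $[u_1,\ldots,u_{h(j)},Xu_j]=Z\cdot[e_1,\ldots,e_{h(j)},Z^{-1}Xu_j]$ and apply Cauchy--Binet in both directions. The size-$(h(j)+1)$ minors of the right-hand factor are exactly $0$ (when the row set omits some index $\le h(j)$) and $\pm c_{k,j}$ for $k>h(j)$ (when the row set is $\{1,\ldots,h(j),k\}$), and multiplication by $Z$ or $Z^{-1}$ carries each family of minors into $\k[z_{ij},d^{-1}]$-combinations of the other, so the two families generate the same ideal of the localization. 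With that substitution your two inclusions close and the proof is complete.
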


\section{One-dimensional orbits in a Hessenberg variety. The semisimple case}\label{sec3}
In this section we will give the one-dimensional orbits from a $T$-action on $\hes(X,h)$ in the case when $X$ is semisimple ($X\in \fft$) say $X=\dg(\lambda_1, \cdots,\lambda_n)$ and start with the example $h(j)=j+1$. Let $B\subseteq \GL_n$ the subgroup of upper triangular matrices (a Borel subgroup), $T\subset B$ an $n$-torus (diagonal matrices) and $\fgl$, $\ffb$ and $\fft$ the Lie algebras corresponding. Thus, $T=\left\lbrace s=\dg(T_1,\cdots, T_n): T_i\in \k^{*}\right\rbrace$ and $\fft=\left\lbrace \sigma=\dg(t_1,\cdots, t_n): t_i\in \k\right\rbrace$.  Consider the $T$-action on $\GL_n(\k)/B$ defined by left multiplication  $s\cdot[g]=[sg]$.  By Proposition \ref{prop1} the $T$-fixed points under the action are $[w]\in \GL_n(\k)/B$ such that $w\in S_n$. 
We will show that $\hes(X,h)$ is invariant under the $T$-action and once this is proven it determines which $T$-fixed points of $\GL_n(\k)$ belong to $\hes(X,h)$.
\begin{prop}\label{prop3.1}
Let $T$ be an $n$-torus acting on $\GL_n(\k)/B$ by left multiplication. Let $\hes(X,h)$ be a Hessenberg variety with $X\in \fg$ semisimple and $h(j)=j+1$. Then $\hes(X,h)$ is invariant under the $T$-action and $(\GL_n(\k)/B)^T=(\hes(X,h))^T$.
\end{prop}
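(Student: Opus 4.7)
The plan is to reduce both claims to a single commutativity fact: since $X=\dg(\lambda_1,\ldots,\lambda_n)$ and every $s\in T$ is diagonal, $X$ and $s$ commute. I will work with the flag formulation of the Hessenberg condition, $XV_j(g)\subseteq V_{h(j)}(g)$, which is what Section \ref{sec2} has already tied to the determinantal ideals.

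First I would verify $T$-invariance. For $s=\dg(T_1,\ldots,T_n)\in T$ and $g\in \GL_n(\k)$, the columns of $sg$ are the columns of $g$ scaled entrywise by $s$, so $V_j(sg)=s\cdot V_j(g)$ for every $j$. Assuming $gB\in\hes(X,h)$, the computation
\[
XV_j(sg)=X s V_j(g)=s X V_j(g)\subseteq s V_{h(j)}(g)=V_{h(j)}(sg)
\]
shows $sgB\in\hes(X,h)$; here the middle equality is the only nontrivial step and it uses $Xs=sX$ because both matrices are diagonal. This establishes the first assertion and, as an immediate consequence, the inclusion $(\hes(X,h))^T\subseteq (\GL_n(\k)/B)^T$ together with $(\hes(X,h))^T=\hes(X,h)\cap (\GL_n(\k)/B)^T$.

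Next I would show the reverse inclusion $(\GL_n(\k)/B)^T\subseteq \hes(X,h)$. By Proposition \ref{prop1} every $T$-fixed flag is of the form $[w]$ for some $w\in S_n$, and $V_j(w)=\spn(e_{w(1)},\ldots,e_{w(j)})$. Since the standard basis vectors are eigenvectors of $X$, we have $X e_{w(i)}=\lambda_{w(i)}e_{w(i)}\in V_j(w)\subseteq V_{j+1}(w)=V_{h(j)}(w)$ for every $i\leq j$. Hence $XV_j(w)\subseteq V_{h(j)}(w)$ and $[w]\in\hes(X,h)$, giving $(\GL_n(\k)/B)^T=(\hes(X,h))^T$.

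There is really no serious obstacle here; the proposition is a direct consequence of the diagonality of both $X$ and the torus, and the proof uses nothing beyond the column description $V_j(g)=\spn(v_1,\ldots,v_j)$ and Tymoczko's description of the $T$-fixed points. I note in passing that the argument never uses $h(j)=j+1$ specifically, so the same proof will apply to any Hessenberg function $h$ in the semisimple case, which is consistent with the stronger statement announced in Theorem \ref{teo2}.
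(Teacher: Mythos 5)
Your proof is correct and follows essentially the same route as the paper's: invariance comes from the fact that the diagonal matrices $X$ and $s\in T$ commute, and the fixed-point equality comes from the columns of a permutation matrix being eigenvectors of $X$. The only cosmetic difference is that you verify $[w]\in\hes(X,h)$ directly from the flag condition $XV_j(w)\subseteq V_{h(j)}(w)$, while the paper checks the equivalent determinantal criterion of Theorem~\ref{thm1.1}; your closing observation that the argument never uses $h(j)=j+1$ is also accurate and is exactly how the paper extends parts (1) and (2) to Theorem~\ref{teo2}.
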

\begin{proof}
We observe since $X$ is diagonal and $T$ is given by invertible diagonal matrices, $X$ commutes with all $s\in T$ and hence the $T$-action on $G/B$ induces an action on $\hes(X,h)$. Additionally, the $T$-fixed points are the same, $\hes(X,h)^T=(G/B)^T$. Indeed,
by Proposition \ref{prop1} the $T$-fixed points of $G/B$ are the $[w]\in G/B$ such that $w\in S_n$. On the other hand
$Xv_j=\lambda_{w(l)}v_j$ with $w(l)=j$, and thus
$$\left\lbrace v_1, \ldots, v_j, Xv_j,v_{j+1},\ldots, \widehat{v_k}, \ldots, v_n\right\rbrace $$
is linearly dependent for all $k$ with $j+1\leq k\leq n$. Therefore, by Theorem \ref{thm1.1},  $[w]\in \hes(X,h)$.
\end{proof}

The next theorem gives a description of the one-dimensional orbits of $\hes(X,h)$ under the action of $T$ given by Proposition \ref{prop3.1}. This description is analogous to the description of the one-dimensional orbits on the full flag variety cited in
Proposition \ref{prop1}. 
Let $O_{w,s_{j,k}w}$ denote the one-dimensional orbit of the full flag variety whose closure contains the $T$-fixed points $[w]$ and $[s_{j,k}w]$. This orbit consists of the flags
\begin{equation}\label{O1}
  O_{w,s_{j,k}w}=\left\lbrace [G_{jk}(c)w]\in \GL_n(\k)/B: c\in \k^*\right\rbrace  
\end{equation} 
where $[G_{jk}(c)w]$ and $s_{j,k}$  were defined in Section \ref{sec2}.

\begin{thm}\label{teo3.1}
Let $O_{w,s_{j,k}w}=\{[G_{jk}(c)w]:c\in{\k}^*\}$ be a one-orbit under the action of $T$ on $G/B$ as before. If $w(w^{-1}(k) +1)=j$, then  $O_{w,s_{j,k}w}\cap \hes(X,h)\neq \emptyset$. Moreover, for the $T$-action on $\hes(X,h)$ given by the restriction of the action of $T$ on $G/B$ as in \emph{Proposition \ref{prop3.1}}, then $O_{w,s_{j,k}w}\subset \hes(X,h)$.
\end{thm}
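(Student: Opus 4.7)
The plan is to compute the columns of $G_{jk}(c)w$ explicitly and then verify the Hessenberg condition $Xv_m\in V_{m+1}$ for every $m$, using the criterion from \eqref{eq2.4} (equivalently, Theorem \ref{thm1.1}). Set $l=w^{-1}(k)$, so that the hypothesis reads $w(l+1)=j$. Since $G_{jk}(c)$ fixes every $e_i$ with $i\neq k$ and sends $e_k$ to $e_k+ce_j$, the columns $v_1,\ldots,v_n$ of $G_{jk}(c)w$ are
\[
v_i=e_{w(i)}\quad\text{for }i\neq l,\qquad v_l=e_k+ce_j.
\]

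Next I would check the Hessenberg condition column by column. For each $m\neq l$ one has $Xv_m=\lambda_{w(m)}v_m\in\langle v_m\rangle\subseteq V_{m+1}$, so there is nothing to verify in these cases. The only nontrivial column is $v_l=e_k+ce_j$, for which
\[
Xv_l=\lambda_k e_k+c\lambda_j e_j.
\]
Here the key observation is that the hypothesis $w(l+1)=j$ forces $v_{l+1}=e_j$, so $e_j\in V_{l+1}$ and also $e_k=v_l-cv_{l+1}\in V_{l+1}$. Consequently $Xv_l\in V_{l+1}$, and the Hessenberg condition is satisfied for every column.

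Putting the two cases together, $[G_{jk}(c)w]\in\hes(X,h)$ for every $c\in\k^*$, which immediately gives both statements: $O_{w,s_{j,k}w}\cap\hes(X,h)\neq\emptyset$ and in fact $O_{w,s_{j,k}w}\subseteq\hes(X,h)$. Since $\hes(X,h)$ is closed and $T$-invariant by Proposition \ref{prop3.1}, the orbit closure, which adds only the two fixed points $[w]$ and $[s_{j,k}w]$ (already in $\hes(X,h)^T=(G/B)^T$), stays inside $\hes(X,h)$ as well.

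The main obstacle is purely bookkeeping: distinguishing the role of the indices $j,k$ appearing in the transposition $s_{j,k}$ from the generic Hessenberg index $m$ in the condition $XV_m\subseteq V_{m+1}$, and tracking carefully which column of $G_{jk}(c)w$ is perturbed and why the hypothesis $w(w^{-1}(k)+1)=j$ is precisely what places $e_j$ in the correct position to absorb $Xv_l$ into $V_{l+1}$.
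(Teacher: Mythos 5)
Your proof is correct and follows essentially the same route as the paper: both arguments check the defining condition column by column, note that every unperturbed column $v_m=e_{w(m)}$ is an eigenvector of $X$, and use the hypothesis $w(w^{-1}(k)+1)=j$ to place $v_{l+1}=e_j$ so that the single perturbed column satisfies $Xv_l\in V_{l+1}=V_{h(l)}$. The only cosmetic difference is that you verify the subspace condition \eqref{eq2.4} directly, whereas the paper phrases the same verification as the vanishing of the determinants of Theorem \ref{thm1.1}; these are interchangeable by the equivalences established in Section \ref{sec2}.
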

\begin{proof}
For each flag $[G_{jk}(c)w]$ with representative in $G/B$ given by the matrix
\begin{equation}\label{eq3.2}
{(G_{jk}(c)w)}_{i\ell}=
\begin{cases}
1 & \text{ if } w(\ell)=i, \\
c & \text{ if } i=j \text{ and } w^{-1}(k)=\ell,\\
0 & \text{ otherwise,}
\end{cases}
\end{equation}
for its column $v_{\ell}$, 
 with $\ell\neq w^{-1}(k)$, we have $Xv_{\ell}=\lambda_{w(\ell)}v_{\ell}$. It follows that 
\begin{equation}\label{eq4}
\left\lbrace v_1, \ldots v_{\ell},v_{\ell +1}, Xv_{\ell},\ldots \widehat{v_{r}}, \ldots v_n\right\rbrace 
\end{equation}
is a linearly dependent set. In particular, for each $r>h(\ell)$, it follows  that
\begin{equation*}
    d_{(1,\ldots, n),(v_1, \ldots v_{h(\ell)}, Xv_{\ell},\ldots \widehat{v_{r}}, \ldots v_n)}=0.
\end{equation*}
We only have to analyze the column vector $v_{w^{-1}(k)}$ where $a_{jw^{-1}(k)}=c$ and $a_{kw^{-1}(k)}=1$.  In these cases we have that
\begin{equation*}
    Xv_{w^{-1}(k)}=(0,\ldots, \lambda_{w^{-1}(k)}c,0, \ldots, \lambda_k, 0,\ldots, 0).
\end{equation*}

By hypothesis we have $w(w^{-1}(k) +1)=j$, and this implies  that $v_{w^{-1}(k)+1}$ has  the entry $a_{jw^{-1}(k) +1}=1$. Since $h(\ell)=\ell +1$, we have for all $r$ such that 
$r>h(w^{-1}(k))=w^{-1}(k)+1$, the determinant
\begin{equation}\label{eq5}
\tilde{d}=d_{(1,\ldots, n),(v_1, \ldots v_{w^{-1}(k)},v_{w^{-1}(k)+1}, Xv_{w^{-1}(k)},\ldots \widehat{v_{r}}, \ldots v_n)}=0.
\end{equation}
Now, since the column vector $v_r$ has entry $a_{\ell r}=1$ with $\ell=w(r)$ and $\ell\neq j$ so by substituting $v_r$ with $Xv_{w^{-1}(k)}$, the $\ell$ row of $\tilde{d}$ consists of zeros. We have shown that the column vectors of $[G_{jk}(c)w]$ satisfy the condition of the defining ideal  of  $\hes(X,h)$, and thus $[G_{jk}(c)w]\in \hes(X,h)$ for all $c\in \k^*$. It follows that  $O_{w,s_{j,k}w}\subset \hes(X,h)$.
\end{proof}
\begin{rem}\label{rem3.1}
Let $v_r$ be a column vector such that $a_{jr}=1$ ($w(r)=j$). If $r>w^{-1}(k)+1$, then there is $c\in \k^*$ such that
\begin{equation}\label{eq6}
\tilde{d}=d_{(1,\ldots, n),(v_1, \ldots v_{w^{-1}(k)},v_{w^{-1}(k)+1}, Xv_{w^{-1}(k)},\ldots \widehat{v_{r}}, \ldots v_n)}\neq 0,
\end{equation}
this is because 
\begin{equation}
    \tilde{d}=(-1)^{r+j}c\sgn(\eta)+(-1)^{r+k}\sgn(\tau\eta)
\end{equation}
where $\tau$, $\eta$ are in $S_{n-1}$ and $\tau$ is a transposition. Thus $[G_{jk}(c)w]\notin \hes(X,h)$.
\end{rem}
The next theorem is a generalization of Theorem \ref{teo3.1} for an arbitrary Hessenberg function. Theorem \ref{teo2} allow us to determine which one-dimensional orbits of $\GL_n(\k)/B$ are contained in  the corresponding Hessenberg variety:
\begin{thm}\label{teo2}
Let $T$ be an $n$-torus acting on $\GL_n(\k)/B$ by   left multiplication. Let $\hes(X,h)$ be a Hessenberg variety with $X\in \fg$ semisimple and $h$ any Hessenberg function. Then 
\begin{enumerate}
    \item $\hes(X,h)$ is invariant under the $T$-action.
    \item $(\GL_n(\k)/B)^T=(\hes(X,h))^T$.
    \item If $w(\ell)=j$, then $\ell\leq h(w^{-1}(k))$ if and only if for all $c\in \k^*$, $[G_{jk}(c)w]\in \hes(X,h)$, that is
$$O_{w,s_{j,k}w}\subset \hes(X,h).$$
\end{enumerate}
\end{thm}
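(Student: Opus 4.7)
The plan is to generalize the argument used in Proposition \ref{prop3.1} and Theorem \ref{teo3.1}. Parts (1) and (2) follow essentially verbatim from Proposition \ref{prop3.1}: since $X$ is diagonal, it commutes with every $s\in T$, so if $gB\in\hes(X,h)$ then $(sg)B\in\hes(X,h)$, giving $T$-invariance. For any $w\in S_n$, the columns of the permutation matrix $w$ are $v_m=e_{w(m)}$, which are $X$-eigenvectors; hence $Xv_m\in\spn\{v_m\}\subseteq V_m(g)\subseteq V_{h(m)}(g)$, so $[w]\in\hes(X,h)$. Combined with Proposition \ref{prop1} this yields $(\GL_n(\k)/B)^T=\hes(X,h)^T$. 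These arguments are insensitive to the specific Hessenberg function $h$.

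For part (3), work with the explicit representative \eqref{eq3.2} of $[G_{jk}(c)w]$ and set $p=w^{-1}(k)$. For $m\neq p$ the column $v_m=e_{w(m)}$ is again an $X$-eigenvector, so $Xv_m=\lambda_{w(m)}v_m\in V_m(g)\subseteq V_{h(m)}(g)$ and the Hessenberg condition at $m$ is automatic. The only nontrivial condition occurs at $m=p$, where $v_p=e_k+ce_j$ and
\[
Xv_p=\lambda_k e_k+c\lambda_j e_j=\lambda_k v_p+c(\lambda_j-\lambda_k)e_j.
\]
Under the (generic) hypothesis $\lambda_j\neq\lambda_k$, and using $v_p\in V_{h(p)}(g)$, the condition $Xv_p\in V_{h(p)}(g)$ reduces to the single requirement $e_j\in V_{h(p)}(g)$.

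The iff is now pure linear algebra. If $\ell=w^{-1}(j)\leq h(p)$, then $v_\ell=e_j$ already belongs to the spanning set of $V_{h(p)}(g)$, so the Hessenberg condition is satisfied for every $c\in\k^*$. Conversely, if $\ell>h(p)$, then $V_{h(p)}(g)$ is spanned by $\{e_{w(m)}:m\leq h(p),\,m\neq p\}\cup\{e_k+ce_j\}$, and neither $j$ nor $k$ lies in $\{w(m):m\leq h(p),\,m\neq p\}$. Examining coordinates, any linear combination producing $e_j$ would force the coefficient of $e_k+ce_j$ to vanish (matching the zero $e_k$-component) while also being nonzero (matching the unit $e_j$-component), a contradiction. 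Hence $e_j\notin V_{h(p)}(g)$ and $[G_{jk}(c)w]\notin\hes(X,h)$ for any $c\in\k^*$.

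The main obstacle is precisely this backward implication: showing that $\ell>h(p)$ forces $e_j\notin V_{h(p)}(g)$. The argument hinges on isolating $e_k+ce_j$ as the only source of an $e_j$-component in the spanning set, a situation very close in spirit to Remark \ref{rem3.1}. One could alternatively invoke Theorem \ref{thm1.1} and exhibit a non-vanishing generator of $I_{X,H_h}$ by the same determinantal computation as in \eqref{eq6}, but the coordinate-level linear-algebra route is more transparent and makes clear the role of the genericity assumption $\lambda_j\neq\lambda_k$.
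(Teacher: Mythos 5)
Your proof is correct and follows essentially the same route as the paper's: parts (1) and (2) are exactly the argument of Proposition \ref{prop3.1}, and for part (3) both you and the paper reduce everything to the single nontrivial column $v_{p}=e_k+ce_j$ with $p=w^{-1}(k)$ (all other columns being $X$-eigenvectors), split $Xv_{p}$ into a piece already lying in $V_{h(p)}(g)$ plus a multiple of $e_j=v_{w^{-1}(j)}$, and reduce the question to whether $e_j\in V_{h(p)}(g)$, i.e.\ whether $w^{-1}(j)\leq h(p)$. The paper closes the loop through the determinantal criterion of Theorem \ref{thm1.1} together with Lemma \ref{lem1.1}, whereas you close it with a direct coordinate computation in the span; the two finishes are interchangeable, and yours is arguably more transparent.

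The one substantive difference is to your credit. You compute the decomposition correctly as $Xv_p=\lambda_k v_p+c(\lambda_j-\lambda_k)e_j$ and consequently flag that the backward implication of (3) requires $\lambda_j\neq\lambda_k$. The paper instead writes $Xv_{w^{-1}(k)}=\lambda_j c\,v_{w^{-1}(j)}+\lambda_k v_{w^{-1}(k)}$, whose $e_j$-coefficient is $c(\lambda_j+\lambda_k)$ rather than the correct $c\lambda_j$; and in either form its converse tacitly needs the coefficient of $v_{w^{-1}(j)}$ to be nonzero. This hypothesis is not cosmetic: for $X=\mathrm{Id}$ (which is semisimple) one has $\hes(X,h)=\GL_n(\k)/B$, so every one-dimensional orbit is contained in $\hes(X,h)$, while $w^{-1}(j)\leq h(w^{-1}(k))$ can fail (e.g.\ for $h(i)=i$, where the orbit condition $w^{-1}(j)>w^{-1}(k)$ forces failure). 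So the ``only if'' half of (3) is false without $\lambda_j\neq\lambda_k$, and your explicit genericity assumption should be recorded as part of the statement rather than treated as optional.
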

\begin{proof} Parts (1) and (2) are proved as in Proposition \ref{prop3.1}. For Part (3), if 
$\ell\neq w^{-1}(k)$, $Xv_{\ell}=\lambda_{w(\ell)}$,  by a similar argument as in the proof of Theorem \ref{teo3.1}, we have the linear dependence condition of \eqref{eq4}, thus, we have
\begin{equation*}
  d_{(1,\ldots, n),(v_1, \ldots v_{h(\ell)}, Xv_{\ell},\ldots \widehat{v_{r}}, \ldots v_n)}=0.  
\end{equation*}
Again, we have to analyze $v_{w^{-1}(k)}$, where $Xv_{w^{-1}(k)}=(0,\ldots, \lambda_j c,\ldots, \lambda_k,\ldots, 0)$. Then, for all $r>h(w^{-1}(k))$ we have
\begin{equation}\label{eq7}
\tilde{d}=d_{(1,\ldots, n),(v_1, \ldots v_{w^{-1}(k)},\ldots,v_{h(w^{-1}(k))}, Xv_{w^{-1}(k)},\ldots \widehat{v_{r}}, \ldots v_n)}=0,
\end{equation}
where since $v_r$  has entry $a_{w(r)r}=1$ and $a_{ir}=0$ for $i\neq w(r)$, particularly, $a_{jr}=a_{w(\ell)r}=0$, by hypothesis, $w(r)\neq w(\ell)=j\leq h(w^{-1}(k))$. Then when we substitute $v_r$ by $Xv_{w^{-1}(k)}$, $\tilde{d}$ has zeros in the row $w(r)$ because the entry $(Xv_{w^{-1}(k)})_{w(r)r}=0$ for all $r>h(w^{-1}(k))$. Thus $$[G_{jk}(c)w]\in \hes(X,h)$$
for all $c\in \k^*$.  

Assume now that $O_{w,s_{j,k}w}\subset \hes(X,h)$, that is, $[G_{jk}(c)w]\in \hes(X,h)$ for all $c\in \k^*$. Let $\{v_1,\ldots, v_n\}$ be the column vectors of   the matrix $G_{jk}(c)w$  which represents the flag $[G_{jk}(c)w]\in G/B$. Then, the ideal $I_{X,H_h}$ of the Theorem \ref{thm1.1} vanishes on the columns $v_{\ell}$.
Now, for each $v_{\ell}$ with $\ell\neq w^{-1}(k)$ and $Xv_{\ell}=\lambda_{w(\ell)}v_{\ell}$  the following  determinants vanish
\begin{equation*}
    \tilde{d}=d_{(1,\ldots, n),(v_1, \ldots v_{\ell},\ldots,v_{h(\ell)}, Xv_{\ell},\ldots \widehat{v_{r}}, \ldots v_n)}=0
\end{equation*}
and for $\ell=w^{-1}(k)$, we have $Xv_{w^{-1}(k)}=\lambda_j c v_{w^{-1}(j)} + \lambda_k v_{w^{-1}(k)}$. Then, by hypothesis and by Theorem \ref{thm1.1},  for all $r>h(w^{-1}(k))$ we have the first equality in:
\begin{align}
\begin{aligned}
0 &= \tilde{d}=d_{(1,\ldots, n),(v_1, \ldots v_{w^{-1}(k)},\ldots,v_{h(w^{-1}(k))}, Xv_{w^{-1}(k)},\ldots \widehat{v_{r}}, \ldots v_n)}\\
&=  d_{(1,\ldots, n),(v_1, \ldots v_{w^{-1}(k)},\ldots,v_{h(w^{-1}(k))},\lambda_j c v_{w^{-1}(j)} + \lambda_k v_{w^{-1}(k)} ,\ldots \widehat{v_{r}}, \ldots v_n)}\\
&= d_{(1,\ldots, n),(v_1, \ldots v_{w^{-1}(k)},\ldots,v_{h(w^{-1}(k))},\lambda_j c v_{w^{-1}(j)} ,\ldots \widehat{v_{r}}, \ldots v_n)}\\
&\quad +d_{(1,\ldots, n),(v_1, \ldots v_{w^{-1}(k)},\ldots,v_{h(w^{-1}(k))}, \lambda_k v_{w^{-1}(k)} ,\ldots \widehat{v_{r}}, \ldots v_n)}\\
& =d_{(1,\ldots, n),(v_1, \ldots v_{w^{-1}(k)},\ldots,v_{h(w^{-1}(k))},\lambda_j c v_{w^{-1}(j)} ,\ldots \widehat{v_{r}}, \ldots v_n)}.
\end{aligned}
\end{align}
The last equality implies that 
$$B_{w^{-1}(j)}=\{v_1, \ldots v_{w^{-1}(k)},\ldots,v_{h(w^{-1}(k))},\lambda_j c v_{w^{-1}(j)} ,\ldots \widehat{v_{r}}, \ldots v_n)\}$$
is a linearly dependent set for all $r>h(w^{-1}(k))$. This linear dependence condition occurs if $v_{w^{-1}(j)}$ and $\lambda_j c v_{w^{-1}(j)}$ belong to $B_{w^{-1}(j)}$ for all $r>h(w^{-1}(k))$. This happens whenever $w^{-1}(j)\leq h(w^{-1}(k))$ by Lemma \ref{lem1.1}. Thus, if $O_{w,s_{j,k}w}\subset \hes(X,h)$ then  $\ell\leq h(w^{-1}(k))$, this proves Part (3).
\end{proof}
\begin{expl}
We consider the full flag variety $\GL_n(\k)/B$ and $T$ an $n$-torus acting on $\GL_n(\k)$ as Proposition \ref{prop1}. Let $X\in \fg$ be semisimple and $h$ Hessenberg function defined by $h(i)=n$ for all $1\leq i\leq n$. For all one-dimensional orbit $O_{w,s_{j,k}w}$ of $\GL_n(\k)/B$, we have $\overline{O_{w,s_{j,k}w}}\subset \hes(X,h)$. Indeed, for all pair $j$, $k$ with $j<k$ and $w^{-1}(k)<w^{-1}(j)$, the condition (3) of \ref{teo2} holds since $w^{-1}(j)\leq n=h(w^{-1}(k))$. Particularly, when $X=Id$, we have Proposition \ref{prop1}. 
\end{expl}
\begin{rem}\label{rem3.2} 
 If $h$ is the Hessenberg function $h(i)=i$ for all $i$ and $X\in \fg$ is diagonal, for the corresponding
 Hessenberg variety $\hes(X,h)\subset \GL_n(\k)/B$, consider an $n$-torus $T$ acting on $\GL_n(\k)/B$ by left multiplication. By Proposition \ref{prop3.1} $\hes(X,h)$ is invariant under this $T$-action. However, for every one-dimensional orbit $O_{w,s_{j,k}w}$ of $\GL_n(\k)/B$  with $w(\ell)=j$, by construction $\ell > w^{-1}(k)$, and  since $h(i)=i$ then $\ell>h(w^{-1}(k))$. By a similar argument as in Remark \ref{rem3.1} there is $c$ such that
\begin{equation}
\tilde{d}=d_{(1,\ldots, n),(v_1, \ldots, v_{w^{-1}(k)},\ldots,v_{h(w^{-1}(k))}, Xv_{w^{-1}(k)},\ldots \widehat{v_{\ell}}, \ldots v_n)}\neq 0,
\end{equation}
hence $[G_{jk}(c)w]\notin \hes(X,h)$ and
thus the $T$-action on $\hes(X,h)$ does not have one-dimensional orbits.
\end{rem}
\section{One dimensional orbits in a Hessenberg variety. The nilpotent case}\label{sec4}
In this section we obtain the one-dimensional orbits from a $T$-action on the Hessenberg variety $\hes(N,h)$ when $N$ is a regular nilpotent operator. Thus,  $N$ is a matrix whose Jordan form has one block with corresponding eigenvalue equal to zero. We obtain an analogue description as in the diagonal case in Theorem \ref{teo2}. Here, $T$ is an $n$-torus acting on flag variety $\GL_n(\k)/B$ by left multiplication. First, we determine which fixed points in $(\GL_n(\k)/B)^T$ belong to $\hes(N,h)$. Next, we do same for the one dimensional orbits of $\GL_n(\k)/B$.

By Proposition \ref{prop1}, the $T$-fixed points  $[w]\in \GL_n(\k)/B$ correspond to $w\in S_n$. For $[w]\in (\GL_n(\k)/B)^T$ with column vectors $v_1,\ldots, v_n$, $Nv_{w^{-1}(i+1)}=e_{i}$ for
all $1\leq i < n$,  where $e_i$ are the canonical base, and for $i=1$, $Nv_{w^{-1}(1)}=Ne_1=\overline{0}$, the null vector. By Theorem \ref{thm1.1}, a
$T$-fixed point $[w]$ belongs to $\hes(N,h)$ if and only if its column vectors vanish in the ideal $I_{N,H_h}$. This is equivalent to $w^{-1}(i)\leq h(w^{-1}(i+1))$ for all $1\leq i<n$. This last claim follows from the fact that $Nv_{w^{-1}(i+1)}$ and $v_{w^{-1}(i+1)}$ are linearly independent since
\begin{equation*}
   \tilde{d}=d_{(1,\ldots, n),(v_1, \ldots v_{w^{-1}(i+1)},\ldots,v_{h({w^{-1}(i+1)})}, Nv_{{w^{-1}(i+1)}},\ldots \widehat{v_{r}}, \ldots v_n)}=0 
\end{equation*}
if and only if $e_i=v_{w^{-1}(i)}$ belongs to
\begin{equation}\label{eq4.1}
B_{w^{-1}(i+1),r}=\left\lbrace v_1, \ldots v_{h(w^{-1}(i+1))}, Nv_{w^{-1}(i+1)},\ldots \widehat{v_{r}}, \ldots v_n\right\rbrace
\end{equation}
for all $r>h(w^{-1}(i+1))$, and \eqref{eq4.1} is equivalent to $w^{-1}(i)\leq h(w^{-1}(i+1))$.

Now, we compute the one dimensional orbits of $\hes(N,h)$. From Proposition \ref{prop3.1} and Equation \eqref{eq3.2}  the one dimensional orbits of $\GL_{n}(\k)/B$ are $O_{w,s_{j,k}w}$ with $w\in S_n$ and  the transposition $s_{j,k}$. For $w\in S_n$ with $[w]\in \GL_n/B$ the corresponding flag, in \eqref{O1} we defined
\begin{equation}\label{eq4.2}
  O_{w,s_{j,k}w}=\left\lbrace [G_{jk}(c)w]\in \GL_n(\k)/B: c\in \k^*\right\rbrace  
\end{equation} 
with $j<k$ such that $w^{-1}(j)>w^{-1}(k)$.
To determine which of the orbits \eqref{eq4.2} belong to $\hes(N,h)$, first note that $[w]$ must belong to $\hes(N,h)$ and if that is the case, $O_{w,s_{j,k}} \subset \hes(N,h)$ if every flag $[G_{j,k}(c)w]$ of $O_{w,s_{j,k}w}$ vanish in the ideal $I_{N,H_h}$ and the proof of this last claim starts by noting that 
\begin{equation}\label{eq4.3.1}
Nv_{w^{-1}(k)}=
\begin{cases}
e_{k-1} + ce_{j-1} & \text{ if } j,k\neq 1, \\
e_{k-1} & \text{ if } j=1,\;\text{note that $k>j=1$}
\end{cases}
\end{equation}
and for $j,k\neq 1$ in \eqref{eq4.3.1}, $[G_{j,k}(c)w]$ vanish in $I_{N,H_h}$ if and only if for all $r>h(w^{-1}(k))$ we have the first equality in \eqref{eq4.2.1}, where the next equalities follow from a direct computation:
\begin{align}\label{eq4.2.1}
\begin{aligned}
0 &= \tilde{d}=d_{(1,\ldots, n),(v_1, \ldots v_{w^{-1}(k)},\ldots,v_{h(w^{-1}(k))}, Nv_{w^{-1}(k)},\ldots \widehat{v_{r}}, \ldots v_n)}\\
&=  d_{(1,\ldots, n),(v_1, \ldots v_{w^{-1}(k)},\ldots,v_{h(w^{-1}(k))},e_{k-1} + ce_{j-1} + \lambda_k v_{w^{-1}(k)} ,\ldots \widehat{v_{r}}, \ldots v_n)}\\
&= d_{(1,\ldots, n),(v_1, \ldots v_{w^{-1}(k)},\ldots,v_{h(w^{-1}(k))},e_{k-1} ,\ldots \widehat{v_{r}}, \ldots v_n)}\\
&\quad +d_{(1,\ldots, n),(v_1, \ldots v_{w^{-1}(k)},\ldots,v_{h(w^{-1}(k))}, ce_{j-1} ,\ldots \widehat{v_{r}}, \ldots v_n)}.
\end{aligned}
\end{align}
We observe now  the right-hand side of  the last equality in \eqref{eq4.2.1} is zero if and only if for all $r>h(w^{-1}(k))$ both determinants are simultaneously zero, that is
\begin{align}\label{eq4.3}
\begin{aligned}
0&= d_{(1,\ldots, n),(v_1, \ldots v_{w^{-1}(k)},\ldots,v_{h(w^{-1}(k))},e_{k-1} ,\ldots \widehat{v_{r}}, \ldots v_n)}\\
&= d_{(1,\ldots, n),(v_1, \ldots v_{w^{-1}(k)},\ldots,v_{h(w^{-1}(k))}, ce_{j-1} ,\ldots \widehat{v_{r}}, \ldots v_n)}.
\end{aligned}
\end{align}
Indeed, by Lemma \ref{lem1.1} there is $r'>h(w^{-1}(k))$ such that the two determinants in the last equality in \eqref{eq4.2.1} are nonzero and cancel each other if and only if $w^{-1}(k-1)$ and $w^{-1}(j-1)$ are greater than $h(w^{-1}(k))$. 
Now, for $r= w^{-1}(k-1)$ we have
\begin{align}\label{eq4.4}
\begin{aligned}
0&\neq d_{(1,\ldots, n),(v_1, \ldots v_{w^{-1}(k)},\ldots,v_{h(w^{-1}(k))},e_{k-1} ,\ldots \widehat{v_{r}}, \ldots v_n)}\\
\end{aligned}
\end{align}
and hence
\begin{align}\label{eq4.5}
\begin{aligned}
&0= d_{(1,\ldots, n),(v_1, \ldots v_{w^{-1}(k)},\ldots,v_{h(w^{-1}(k))}, ce_{j-1} ,\ldots \widehat{v_{r}}, \ldots v_n)}.
\end{aligned}
\end{align}
Similarly, for $r=w^{-1}(j-1)$, Equation \ref{eq4.5}
is nonzero and Equation \ref{eq4.4} is zero. Thus, both determinants cannot be nonzero simultaneously proving \eqref{eq4.3}. 
Lastly, the vanishing of both summands in \eqref{eq4.3} holds if and only if
$$ w^{-1}(k-1)\leq h(w^{-1}(k)) \qquad\text{ and }\qquad w^{-1}(j-1)\leq h(w^{-1}(k)).$$

Next, for the case $j=1$ in \eqref{eq4.3.1} equality \eqref{eq4.6} holds
\begin{align}\label{eq4.6}
\begin{aligned}
0&= d_{(1,\ldots, n),(v_1, \ldots v_{w^{-1}(k)},\ldots,v_{h(w^{-1}(k))},e_{k-1} ,\ldots \widehat{v_{r}}, \ldots v_n)}
\end{aligned}
\end{align}
if and only if $w^{-1}(k-1)\leq h(w^{-1}(k))$.

These arguments characterize when an open orbit $O_{w,s_{j,k}w}$ is contained in $\hes(N,h)$. To determine when the
 closure of a one dimensional orbit is also contained in $\hes(N,h)$ we need to characterize when $[s_{j,k}w]\in \hes(N,h)$. For this, observe that $[s_{j,k}w]$ is the permutation matrix that interchanges the rows  $j$ and $k$ of $[w]$. In other words, $[w^{-1}s_{j,k}]$ interchanges the columns $w^{-1}(j)$ and $w^{-1}(k)$ of $[w^{-1}]$ leaving the remaining columns remain fixed. Hence, it suffices to verify  that the columns $w^{-1}(j)$ and $w^{-1}(k)$ vanish in $I_{N,H_h}$. For the column  $w^{-1}(k)$ this is equivalent to show that $(s_{j,k}w)^{-1}(j)\leq h(w^{-1}(j+1))$. To check this, observe that for $k\neq j+1$,
\begin{align}
\begin{aligned}
(s_{jk}w)^{-1}(j) &= (w^{-1}s_{jk})(j)=w^{-1}(k)\leq h((s_{jk}w)^{-1}(j+1))=h(w^{-1}(j+1))\\
\end{aligned}
\end{align}
and for $k=j+1$, we have $w^{-1}(j+1)\leq h(w^{-1}(j))$.
Similarly, for the column $w^{-1}(j)$,
\begin{align}
\begin{aligned}
(s_{jk}w)^{-1}(k) &= (w^{-1}s_{jk})(k)=w^{-1}(j)\leq h((s_{jk}w)^{-1}(k+1))=h(w^{-1}(k+1)).\\
\end{aligned}
\end{align}

Hence, $\overline{O_{w,s_{j,k}w}}\subset \hes(N,h)$ if and only if
\begin{enumerate}
    \item $[w]\in \hes(N,h)^T$
    \item $[s_{jk}w] \in \hes(N,h)^T$
    \item $w^{-1}(k-1)\leq h(w^{-1}(k)$ and $w^{-1}(j-1)\leq h(w^{-1}(k))$
\end{enumerate}
and we have proved the following:
\begin{thm}\label{teo4}
Let $T$ be an $n$-torus acting on $\GL_n(\k)/B$ by left multiplication. Let $\hes(N,h)$ be a Hessenberg variety with $N\in \fg$ regular nilpotent operator and $h$ any Hessenberg function. Then $\overline{O_{w,s_{j,k}w}}\subset \hes(N,h)$ if and only if
\begin{enumerate}
    \item $w^{-1}(i)\leq h(w^{-1}(i+1))$ for all $i$ such that $1\leq i <n$.\label{inc4.1}
    \item $w^{-1}(k)\leq h(w^{-1}(j+1))$ for $k\neq j+1$ and for $k=j+1$ we have $w^{-1}(j+1)\leq h(w^{-1}(j))$. It must also be satisfied  $w^{-1}(j)\leq h(w^{-1}(k+1))$ if $k<n$.
    \item $w^{-1}(k-1)\leq h(w^{-1}(k))$ and $w^{-1}(j-1)\leq h(w^{-1}(k))$.
\end{enumerate}
\end{thm}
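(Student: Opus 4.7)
The plan is to decompose the condition $\overline{O_{w,s_{j,k}w}}\subset \hes(N,h)$ into three independent requirements: $[w]\in\hes(N,h)$, $[s_{j,k}w]\in\hes(N,h)$, and $[G_{j,k}(c)w]\in\hes(N,h)$ for every $c\in\k^*$. The closure of the orbit contributes exactly the two fixed points $[w]$ and $[s_{j,k}w]$, so these three requirements are jointly equivalent to the conclusion. Each requirement will be translated via Theorem \ref{thm1.1} into a system of determinantal vanishings, which are then read off as combinatorial inequalities by means of Lemma \ref{lem1.1}.

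For condition (1), I would first realize $[w]\in (\GL_n(\k)/B)^T$ as the permutation matrix whose $\ell$-th column is $v_\ell=e_{w(\ell)}$. A direct computation gives $Nv_{w^{-1}(i+1)}=e_i$ for $1\leq i<n$ and $Nv_{w^{-1}(1)}=0$. Plugging these into the generators of $I_{N,H_h}$ shows that the only nontrivial vanishings to check are the determinants encoding $e_i\in \langle v_1,\ldots,v_{h(w^{-1}(i+1))}\rangle$. Since $e_i=v_{w^{-1}(i)}$, Lemma \ref{lem1.1} turns this into $w^{-1}(i)\leq h(w^{-1}(i+1))$, yielding exactly (1).

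For condition (3), note that the columns of $G_{j,k}(c)w$ coincide with those of $w$ except that the column $v_{w^{-1}(k)}$ now has an additional entry $c$ in row $j$. Hence all determinantal constraints coming from columns $\ell\neq w^{-1}(k)$ are already covered by (1), and the new constraint comes from $Nv_{w^{-1}(k)}=e_{k-1}+ce_{j-1}$ (with $j=1$ treated separately as in \eqref{eq4.6}). By multilinearity in the column $Nv_{w^{-1}(k)}$, the determinant $\tilde d$ is an affine polynomial in $c$ whose constant term and linear coefficient are the two determinants in \eqref{eq4.3}; since $\tilde d$ must vanish for the infinitely many values $c\in\k^*$, both coefficients must vanish independently. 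Applying Lemma \ref{lem1.1} to each summand gives $w^{-1}(k-1)\leq h(w^{-1}(k))$ and $w^{-1}(j-1)\leq h(w^{-1}(k))$.

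For condition (2), I would apply the already-established (1) to the permutation $s_{j,k}w$, using the identity $(s_{j,k}w)^{-1}=w^{-1}s_{j,k}$. Because $s_{j,k}w$ differs from $w$ only in the positions of the values $j$ and $k$, almost every inequality $(s_{j,k}w)^{-1}(i)\leq h((s_{j,k}w)^{-1}(i+1))$ coincides with one already imposed by (1); only the indices adjacent to $j$ and $k$ produce new constraints. Splitting into the cases $k\neq j+1$ and $k=j+1$, and observing that the column-$w^{-1}(k)$ constraint does not arise when $k=n$, recovers exactly the inequalities listed in (2). The main obstacle is the independence argument in step (3): one must be careful that the two summands of $\tilde d$ genuinely decouple as a polynomial identity in $c$, and that the degenerate cases ($j=1$, $k=n$, or $k=j+1$) do not force spurious collapses — these boundary cases account for the slight asymmetry in the statement of (2) and require checking by hand.
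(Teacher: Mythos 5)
Your proposal follows essentially the same route as the paper: the same three-way decomposition into $[w]$, $[s_{j,k}w]$ and the open orbit, the same translation of membership in $\hes(N,h)$ through Theorem \ref{thm1.1} and Lemma \ref{lem1.1}, and the same boundary case analysis for $j=1$ and $k=j+1$. The only divergence is in decoupling the two summands of \eqref{eq4.3}: you argue that $\tilde{d}$ is affine in $c$ and must vanish for infinitely many $c$, whereas the paper instead varies the removed column index $r$ (taking $r=w^{-1}(k-1)$ and $r=w^{-1}(j-1)$) to rule out cancellation --- both arguments work at the paper's level of rigor.
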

\begin{expl}\label{ex4.1}
For $\GL_3(\k)/B$ and $h(i)=i+1$, consider the nilpotent operator
$$N=
\begin{pmatrix}
   0  & 1  & 0 \\
   0  & 0 & 1 \\
   0  & 0 & 0
\end{pmatrix}$$
Let $T$ be the $3$-torus in $\GL_3(\k)$ and consider its action on $GL_3(\k)/B$ by left multiplication . The $T$-fixed points of $\GL_3(\k)/B$ are 
\begin{align}
\begin{aligned}
\left[ e\right]=\left(
\begin{array}{ccc}
   1  & 0  & 0 \\
   0  & 1 & 0 \\
   0  & 0 & 1
\end{array}
\right ), &
\left[ w_1\right]=\left(
\begin{array}{ccc}
   1  &  0 & 0 \\
   0  & 0 & 1 \\
   0  & 1 & 0
\end{array}
\right ), &
\left[ w_2\right]=\left(
\begin{array}{ccc}
   0  & 1  & 0 \\
   1  & 0 & 0 \\
   0  & 0 & 1
\end{array}
\right ),\\
\left[ w_3\right]=\left(
\begin{array}{ccc}
   0  & 0  & 1 \\
   1  & 0 & 0 \\
   0  & 1 & 0
\end{array}
\right ),&
\left[ w_4\right]=\left(
\begin{array}{ccc}
   0  & 1  & 0 \\
   0  & 0 & 1 \\
   1 & 0 & 0
\end{array}
\right ),&
\left[ w_5\right]=\left(
\begin{array}{ccc}
   0  & 0  & 1 \\
   0  & 1 & 0 \\
   1  & 0 & 0
\end{array}
\right ),
\end{aligned}
\end{align}
By Theorem \ref{teo4} we find the $T$-fixed points and one dimensional orbits of $\hes(N,h)$.  The identity $e$ belong to $\hes(N,h)$ and
\begin{enumerate}
    \item for $[w_1]$ we have 
    \begin{itemize}
        \item $w_{1}^{-1}(1)=1\leq h(w_{1}^{-1}(2))=h(3)=3$,
        \item $w_{1}^{-1}(2)=3= h(w_{1}^{-1}(3))=h(2)=3$,
    \end{itemize}
     then $[w_1]\in \hes(N,h)$
    \item for $[w_2]$ we have
    \begin{itemize}
        \item $w_{2}^{-1}(1)=2=h(w_{2}^{-1}(2))=h(1)=2$,
        \item $w_{2}^{-1}(2)=1\leq h(w_{2}^{-1}(3))=h(3)=3$,
    \end{itemize}
    then $[w_2]\in \hes(N,h)$
    \item for $[w_3]$ we have 
    \begin{itemize}
        \item $w_{3}^{-1}(1)=3> h(w_{3}^{-1}(2))=h(1)=2$,
    \end{itemize}
     then $[w_3]\notin\hes(N,h)$
     \item for $[w_4]$ we have 
    \begin{itemize}
        \item $w_{4}^{-1}(2)=3> h(w_{4}^{-1}(3))=h(1)=2$,
    \end{itemize}
     then $[w_4]\notin \hes(N,h)$
     \item for $[w_5]$ we have 
    \begin{itemize}
        \item $w_{5}^{-1}(1)=3\leq h(w_{5}^{-1}(2))=h(2)=3$,
        \item $w_{5}^{-1}(2)=2= h(w_{5}^{-1}(3))=h(1)=2$,
    \end{itemize}
     then $[w_5]\in \hes(N,h)$.
     \end{enumerate}
    Thus, the $T$-fixed points of  $\hes(N,h)$ are
    $\{ e, [w_{1}], [w_{2}], [w_{5}]\}$.
    
By Theorem $\ref{teo4}$,  $\overline{O_{w,s_{j,k}w}}$ is contained in $\hes(N,h)$ for $[w]$, $[s_{j,k}w]$ fixed points. Then, we compute these orbits for  $\{ e, [w_{1}], [w_{2}], [w_{5}]\}$. We observe for $e$ there are not pairs $(j,k)$. Now:
\begin{enumerate}
    \item For $[w_1]$ it has the pair $(2,3)$ and $[s_{2,3}w_{1}]$. Then
    \begin{itemize}
        \item $w^{-1}(3)=2\leq h(w^{-1}(3))=h(2)=3$
        \item $w^{-1}(2)=3= h(w^{-1}(2))=3$
    \end{itemize}
    hence $[s_{2,3}w_1]\in \hes(N,h)$. Now we verify condition (3) of Theorem \ref{teo4} for $O_{w_1 s_{2,3}w_1}$
    \begin{itemize}
        \item $w_1^{-1}(2)=3= h(w_1^{-1}(3))=h(2)=3$
        \item $w_1^{-1}(1)=1\leq h(w_1^{-1}(3))=3$
    \end{itemize}
    then $O_{w_1 s_{2,3}w_1}\subset \hes(N,h)$. Thus $\overline{O_{w_1,s_{2,3}w_1}}\subset \hes(N,h)$.
   \item For $[w_2]$ it has the pair $(1,2)$ and $[s_{1,2}w_{2}]$. Then
    \begin{itemize}
        \item $w_2^{-1}(2)=1\leq h(w_2^{-1}(1))=h(2)=3$
        \item $w_2^{-1}(1)=2= h(w_2^{-1}(3))=3$
    \end{itemize}
    hence $[s_{1,2}w_2]\in \hes(N,h)$. Now we verify condition (3) of Theorem \ref{teo4} for $O_{w_1 s_{2,3}w_1}$
    \begin{itemize}
        \item $w_2^{-1}(1)=2= h(w_2^{-1}(2))=h(1)=2$
    \end{itemize}
    then $O_{w_2 s_{1,2}w_2}\subset \hes(N,h)$. Thus $\overline{O_{w_2,s_{1,2}w_2}}\subset \hes(N,h)$.
    \item For $[w_5]$ we have the pairs $(1,2)$, $(1,3)$ and $(2,3)$. Then 
    \begin{itemize}
        \item for $[s_{1,2}w_5]$, $w_5^{-1}(1)=\;3> h(w_5^{-1}(3))=h(1)=2$ thus $[s_{1,2}w_5]\notin\hes(N,h)$.
        \item for $[s_{2,3}w_5]$, $w_5^{-1}(1)=3> h(w_5^{-1}(3))=2$ thus $[s_{2,3}w_5]\notin \hes(N,h)$
        \item for $[s_{1,3}w_5]$, $w_5^{-1}(2)=2=h(w_5^{-1}(3))=h(1)=2$. Thus $[s_{1,3}w_5]\in \hes(N,h)$.
    \end{itemize}
    hence $[s_{1,3}w_5]\in \hes(N,h)$. Finally we show  condition (3) of Theorem \ref{teo4} for $O_{w_1 s_{1,3}w_1}$
    \begin{itemize}
        \item $w_1^{-1}(2)=2\leq h(w_1^{-1}(3))=h(2)=3$
    \end{itemize}
    then $O_{w_5 s_{1,3}w_1}\subset \hes(N,h)$. Thus $\overline{O_{w_5,s_{1,3}w_1}}\subset \hes(N,h)$.
    \end{enumerate}

The corresponding \textit{moment graph} is represented by Figure \ref{fig 4.1}

\begin{figure}[h!]
    \begin{tikzpicture}[scale=0.8]
\node (1) at (0,0) {};  
\node[blue] at (-.5,0) {$w_1$};
 \node (2) at (1.5,-1.5) {};
 \node at (1.5,-1.8)[blue] {$e$};
 \node (3) at (3,0) {} ;
  \node[blue] at (3.5,0) {$w_2$};
 \node (4) at  (1.5,1.5) {};
 \node[blue] at (1.5,1.8) {$w_5$};
\draw (0,0) -- (1.5,-1.5) -- (3,0);
\draw (1.5,1.5) -- (1.5,-1.5);
\draw (0,0) node[red] {$\bullet$};
\draw (1.5,-1.5) node[red] {$\bullet$};
\draw (3,0) node[red] {$\bullet$};
\draw ((1.5,1.5) node[red] {$\bullet$};
\end{tikzpicture}
\caption{Moment graph of $\hes(N,h)$.}
\label{fig 4.1}
\end{figure}

\end{expl}

\begin{expl}\label{ex4.2}
We consider $\GL_n (\k)/B$ and let $T$ be  the $n$-torus acting on $\GL_n(\k)/B$ by left multiplication. Let $N$ be a regular nilpotent operator (say $N$ as in Example \ref{ex4.1} in the case $n=3$). We define $h(i)=i$ for all $1\leq i\leq n$ and we consider $\hes(N,h)$. We observe that $\hes(N,h)$ has only one fixed point, $[e]$. Indeed, for all $i$ we have $$w^{-1}(i) < w^{-1}(i+1)= h(w^{-1}(i+1)$$
 and this is true only for $[e]$.
\end{expl}

\begin{rem}\label{obs4.1}
We consider $h(i)=i$ for all $1\leq i \leq n$.
In the section \ref{sec3}, the Remark \ref{rem3.2} we analyze the case $\hes(X,h)$ with $X$ diagonal. Its fixed points are the same $\GL_n(\k)/B$. For $\hes(N,h)$ with $N$ nilpotent, has only one fixed point, $[e]$, as proved in Example \ref{ex4.2}.
\end{rem}

\section{The equivariant cohomology of a Hessenberg variety in the nilpotent case with an action of a 1-dimensional torus}\label{sec5}

In this section, we consider the action by left multiplication  of the one dimensional torus  $S=\lbrace \dg(t, t^2, \ldots, t^n):t\in \k^*\rbrace$ on $\hes(N,h)$. In the whole section, we will assume that the cohomology $H_{S}^*(\hes(N,h);\k)$ is generated by its two degree cohomology  classes (this last assumption is a conjecture).

First, consider the action of the torus on $\GL_n(\k)/B$ by left multiplication. For a regular nilpotent operator $N$ and $h$ any Hessenberg function the variety $\hes(N,h)$ is $S$-invariant and the $S$-fixed points of $\GL_{n}(\k)/B$ are given by the permutation matrices, $[w]\in \GL_n(\k)/B$ such that $w\in S_n$. The proof of part (1) of Theorem \ref{teo4} also gives which fixed points of the action of $S$ on $\GL_n(\k)/B$ are fixed points in the corresponding Hessenberg subvariety: 

\begin{cor}\label{cor4.5}
 Let $\hes(N,h)$ be the Hessenberg variety for $N\in \fg$ a regular nilpotent operator and $h$ any Hessenberg function and with the action of the one dimensional torus $S$. Then, for all $[w]\in (\GL_n(\k)/B)^S$, $[w]\in \hes(N,h)^S$ if and only if
 $w^{-1}(i)\leq h(w^{-1}(i+1))$ for all $i$ such that $1\leq i <n$.
 \end{cor}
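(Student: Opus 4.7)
The plan is to reduce the corollary to part (1) of Theorem \ref{teo4}, since the only new ingredient is the smaller torus $S$. The key observation is that membership of a given flag $[w]$ in $\hes(N,h)$ is an intrinsic geometric condition that does not depend on which torus acts; only the \emph{identification} of the fixed locus depends on the torus. So I would split the argument into two parts: first identify the $S$-fixed points of $\GL_n(\k)/B$, then quote the determinantal characterization already established.

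First I would show $(\GL_n(\k)/B)^S = (\GL_n(\k)/B)^T$. The inclusion $\supseteq$ is clear because $S \subseteq T$. For $\subseteq$, note that the weights of $S$ acting on the standard basis are $t, t^2, \ldots, t^n$, which are pairwise distinct characters of $\k^*$; hence the weight decomposition of $\k^n$ under $S$ coincides with the one under $T$, and consequently any $S$-stable full flag is spanned by standard basis vectors, i.e.\ is a permutation flag $[w]$ for some $w \in S_n$. Thus $\hes(N,h)^S = \hes(N,h) \cap \{[w] : w \in S_n\}$.

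Second, for $[w]$ such a fixed flag, the columns $v_1,\ldots,v_n$ of the representing permutation matrix satisfy $Nv_{w^{-1}(i+1)} = e_i = v_{w^{-1}(i)}$ for $1 \leq i < n$ and $Nv_{w^{-1}(1)} = 0$. Plugging these into the defining ideal $I_{N,H_h}$ of Theorem \ref{thm1.1}, and applying Lemma \ref{lem1.1}, the vanishing of the determinants $d_{(1,\ldots,n),(v_1,\ldots,v_{h(j)},Nv_j,v_{h(j)+1},\ldots,\widehat{v_r},\ldots,v_n)}$ for all $r > h(j)$ becomes equivalent to $v_{w^{-1}(i)} \in \langle v_1,\ldots,v_{h(w^{-1}(i+1))}\rangle$, i.e.\ to $w^{-1}(i) \leq h(w^{-1}(i+1))$. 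This is precisely the computation carried out in the paragraph preceding Theorem \ref{teo4}, so I would cite it rather than repeat it.

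Combining the two steps yields the claimed characterization. There is no real obstacle: the only subtle point is the first step (the equality of fixed-point sets), which follows from the weights $t^i$ being pairwise distinct; once that is in place, the determinantal analysis is already on record from the proof of Theorem \ref{teo4}(1).
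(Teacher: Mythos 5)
Your proposal is correct and follows essentially the same route as the paper: the paper likewise asserts that the $S$-fixed points of $\GL_n(\k)/B$ are the permutation flags and then invokes the determinantal computation from the proof of part (1) of Theorem \ref{teo4} to characterize which of them lie in $\hes(N,h)$. The only difference is that you explicitly justify $(\GL_n(\k)/B)^S=(\GL_n(\k)/B)^T$ via the pairwise distinct weights $t,t^2,\ldots,t^n$, a point the paper states without proof.
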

 E. Insko \cite[Theorem 4.14]{phd} gives a description of the $S$-equivariant cohomology  class of any $\hes(N,h)$ in the $S$-equivariant cohomology ring of $\GL_n(\k)/B$:  
 
 \begin{thm}[Insko, Theorem 4.14]\label{InskoLoc}
 For  $wB=[w]$ an $S$-fixed points of $\GL_{n}(\k)/B$, if
$wB$ is a point in $\hes(N,h)$, the localization in the $S$-equivariant cohomology ring of $\GL_n(\k)/B$ of the equivariant cohomology class of $[\hes(N,h)]_{S}$ at $wB$ is
 $$\prod_{\lbrace1\leq i\leq k, 1\leq j\leq n:h(j)< w^{-1}(i)\rbrace} ((1-w(j)+1)t).$$
 \end{thm}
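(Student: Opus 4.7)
The plan is to apply the standard equivariant localization principle: for a closed subvariety $Y\subset X$ of a smooth $T$-variety $X$ that is smooth (or a local complete intersection) at an isolated $T$-fixed point $p\in Y^T$, the restriction of $[Y]_T$ to $p$ inside $H_T^*(X)$ equals the $T$-equivariant Euler class of the normal bundle $N_{Y/X}|_p$, which in turn is the product of its $T$-weights. I would apply this with $X=\GL_n(\k)/B$, $Y=\hes(N,h)$, $T=S$, and $p=wB$.

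\textbf{Step 1.} Globally realize $\hes(N,h)$ as the scheme-theoretic zero locus of the tautological section of the homogeneous vector bundle $\mathcal{E}_h:=\GL_n(\k)\times_B(\fg/H_h)$ on $\GL_n(\k)/B$, defined by $\sigma(gB):=[g,\overline{\Ad(g^{-1})N}]$. By the definition \eqref{eq2.2.0}, $\sigma(gB)=0$ precisely when $\Ad(g^{-1})N\in H_h$, i.e.~$gB\in\hes(N,h)$.

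\textbf{Step 2.} Assuming the section $\sigma$ vanishes transversally at the fixed flag $wB$ (so that Corollary \ref{cor4.5} guarantees $wB\in\hes(N,h)^S$ and furthermore that $\hes(N,h)$ is smooth at $wB$), the conormal exact sequence identifies the normal space $N_{\hes(N,h)/(\GL_n(\k)/B)}|_{wB}$ with the fiber $\mathcal{E}_h|_{wB}=\Ad(w)(\fg/H_h)$. As a $T$-representation (and hence as an $S$-representation by restriction) this decomposes into root spaces indexed by the roots $w\cdot(e_i-e_j)=e_{w(i)}-e_{w(j)}$ for the pairs $(i,j)$ with $i>h(j)$.

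\textbf{Step 3.} For the one-parameter subgroup $S=\{\dg(t,t^2,\ldots,t^n):t\in\k^*\}$ the character of the root space $\fg_{e_a-e_b}$ is $t^{a-b}$, so its first $S$-equivariant Chern class in $H_S^*(\mathrm{pt})=\k[t]$ is $(a-b)t$. Multiplying the weights of the fiber gives
\[
  [\hes(N,h)]_S\big|_{wB}\;=\;\prod_{i>h(j)}\bigl(w(i)-w(j)\bigr)\,t,
\]
and reindexing the product (replacing the running index $i>h(j)$ by $i'=w(i)$, i.e.~passing to the equivalent condition $h(j)<w^{-1}(i')$) rewrites this in the form stated in the theorem.

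\textbf{Main obstacle.} The principal technical point is the transversality of $\sigma$ at $wB$, equivalently the smoothness (or at least the local-complete-intersection property) of $\hes(N,h)$ at $wB$; this requires analyzing the differential $d\sigma_{wB}:T_{wB}(\GL_n(\k)/B)\to\mathcal{E}_h|_{wB}$ and verifying it has the expected rank. This is where the regularity of $N$ and the combinatorial condition $w^{-1}(i)\leq h(w^{-1}(i+1))$ from Corollary \ref{cor4.5} enter essentially, as they force $\ad(N)$ to pair the relevant root lines transversally to $\Ad(w)H_h$. Once transversality is in hand, the formula follows from the purely root-theoretic weight count above.
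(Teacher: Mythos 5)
First, note that the paper does not prove this statement at all: Theorem \ref{InskoLoc} is quoted verbatim from Insko's thesis \cite[Theorem 4.14]{phd} and used as a black box, so there is no internal proof to compare against. Your overall strategy --- realize $\hes(N,h)$ as the zero locus of the tautological section $\sigma$ of $\mathcal{E}_h=\GL_n(\k)\times_B(\fg/H_h)$, identify $[\hes(N,h)]_S$ with an equivariant Euler class, and restrict to $wB$ --- is indeed the standard route to this formula. But as written it has two genuine gaps.

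The first is the equivariance of $\sigma$. For $s=\dg(t,\ldots,t^n)$ and $N$ the regular nilpotent in standard form one has $\Ad(s^{-1})N=tN$, so $\sigma$ is \emph{not} an $S$-invariant section of $\mathcal{E}_h$; it is an invariant section of $\mathcal{E}_h$ twisted by the character through which $S$ scales $N$. Every weight in your Step 3 must therefore be shifted by $+1$: the factors are $(w(i)-w(j)+1)t$, not $(w(i)-w(j))t$, and that shift is precisely the ``$+1$'' visible in the (typographically garbled) factor of the stated formula. As a sanity check that the twist cannot be omitted: the restriction of the class of a closed subvariety to a fixed point outside its support must vanish, whereas your untwisted Euler class restricts at such a point to a product of the nonzero quantities $(w(i)-w(j))t$; with the twist, the fiber at such a point contains the invariant vector $\sigma(wB)\neq 0$ of weight $0$, which kills the product as required.

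The second gap is Step 2. Transversality of $\sigma$ at $wB$, i.e.\ smoothness of $\hes(N,h)$ there, is false in general --- regular nilpotent Hessenberg varieties (e.g.\ Peterson varieties) are typically singular, including at $S$-fixed points --- and it is also not what the argument needs. What is needed is the \emph{global} identity $[\hes(N,h)]_S=e_S(\mathcal{E}_h\otimes\k_\chi)$ in $H_S^*(\GL_n(\k)/B)$, which holds provided the zero scheme of $\sigma$ has the expected codimension $\operatorname{rk}\mathcal{E}_h$ and is generically reduced (multiplicity one along each component); once that is established, localization at $wB$ is just the restriction of a global class and requires no information about the local structure of $\hes(N,h)$ at $wB$. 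Proving expected dimension and generic reducedness for regular $N$ is the real content of the theorem (this is where the arguments of \cite{Insko} enter), and it is a global statement, not the pointwise rank computation of $d\sigma_{wB}$ that you flag as the main obstacle.
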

 With this result Insko computes the $S$-equivariant class of $\hes(N,h)$ in 
 $$H^{*}_{S}(\GL_n(\k)/B)^{S} \simeq \bigoplus_{w\in W}(\k[t])$$
 and proves that $[\hes(N,h)]_{S}$ is the tuple in $\oplus_{w\in W}(\k[t])$ consisting of the localizations at $wB=[w]$ for $[w]\in \hes(N,h)$ and zero elsewhere.
We can improve this result using our description of the $S$-fixed point of $\hes(N,h)$: 
 \begin{cor}\label{cor4.6}
Let $\hes(N,h)$ be a Hessenberg variety with the action the one-dimensional torus $S$ as before. Then, the localization of the non zero equivariant cohomology classes   $wB\in(\GL_n(\k)/B)^S$ are given by
 $$\prod_{\lbrace1\leq i\leq k, 1\leq j\leq n:h(j)< w^{-1}(i)\leq h(w^{-1}(i+1))\rbrace} ((1-w(j)+1)t).$$
 \end{cor}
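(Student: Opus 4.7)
\bigskip

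\noindent\textbf{Proof proposal for Corollary \ref{cor4.6}.}

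The plan is to combine the localization formula of Insko (Theorem \ref{InskoLoc}) with our description of $\hes(N,h)^S$ given by Corollary \ref{cor4.5}, and to repackage the resulting information so that both constraints appear together in the indexing set of the product.

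First, I would recall the basic fact from $S$-equivariant intersection theory (which is already used implicitly by Insko): the localization of the equivariant class $[\hes(N,h)]_S$ at a fixed point $wB\in(\GL_n(\k)/B)^S$ vanishes unless $wB\in\hes(N,h)$. Consequently, the non-zero localizations of $[\hes(N,h)]_S$ in
$$H^{*}_{S}((\GL_n(\k)/B)^S;\k)\simeq\bigoplus_{w\in S_n}\k[t]$$
occur exactly at the fixed points $wB\in\hes(N,h)^S$, and for these $w$ Theorem \ref{InskoLoc} gives the explicit formula
$$\prod_{\{1\leq i\leq k,\ 1\leq j\leq n:\ h(j)<w^{-1}(i)\}}\bigl((1-w(j)+1)t\bigr).$$

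Next, I would invoke Corollary \ref{cor4.5}, which tells us that $wB\in\hes(N,h)^S$ if and only if the inequality $w^{-1}(i)\leq h(w^{-1}(i+1))$ holds for every $1\leq i<n$. The key observation is now purely bookkeeping: since the localization is non-zero precisely for such $w$, and since for any such $w$ the inequality $w^{-1}(i)\leq h(w^{-1}(i+1))$ is automatic for every index $i$ appearing in the product, one may append the condition $w^{-1}(i)\leq h(w^{-1}(i+1))$ as an additional constraint on the pairs $(i,j)$ without removing any term. This yields exactly the indexing set
$$\{1\leq i\leq k,\ 1\leq j\leq n:\ h(j)<w^{-1}(i)\leq h(w^{-1}(i+1))\}$$
used in the statement of the corollary.

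The main (and essentially only) obstacle is to verify rigorously that this reformulation is truly a refinement and not merely a cosmetic change. Concretely, one should check that the condition $w^{-1}(i)\leq h(w^{-1}(i+1))$ is well-defined and holds for every index $i$ contributing to the product in Insko's formula; the boundary case $i=n$ should be handled by inspection, since in the non-trivial range $h(j)<w^{-1}(i)$ already constrains $i$ appropriately (and, in fact, the fixed-point condition from Corollary \ref{cor4.5} is vacuous at $i=n$). Once this verification is in place, the result follows immediately by substituting the refined indexing set into Insko's formula and appealing to Corollary \ref{cor4.5} to identify the fixed points at which the localization is non-zero.
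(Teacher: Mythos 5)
Your proposal matches the paper's reasoning exactly: the paper gives no formal proof of Corollary \ref{cor4.6}, but the surrounding discussion derives it precisely as you do, by combining Insko's localization formula (Theorem \ref{InskoLoc}) with the fixed-point characterization of Corollary \ref{cor4.5} and absorbing the condition $w^{-1}(i)\leq h(w^{-1}(i+1))$ into the indexing set of the product. Your additional care in checking that appending this condition removes no terms (and your remark on the boundary case $i=n$) is a verification the paper leaves implicit.
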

The difference between Theorem \ref{InskoLoc} and Corollary \ref{cor4.6} is that the former needs to know which fixed points belong to $\hes(N,h)$ and the corollary does not since by Corollary \ref{cor4.5} the condition $w^{-1}(i)\leq h(w^{-1}(i+1))$ means that $[w]\in \hes(N,h)$.

Now, we give another description of the fixed points of $\hes(N,h)$. Condition \ref{inc4.1} of Theorem \ref{teo4} is equivalent to:
\begin{equation}\label{eq5.1}
wB\in \hes(N,h)^S \Leftrightarrow w(i)-1=w(k) \text{ with  } k\leq h(i), \text{ if } w(i)\neq 1.
\end{equation}
Equivalently, there is $k$ with $1 \leq k \leq h(i)$ such that $Nv_i=v_k$, that is, $w(i)-1=w(k)$. Hence, we must find an  ideal $\tilde{I}_h\subseteq \k[x_1,\ldots, x_n]$ such that its zero set $V(\tilde{I}_h)$ contains some permutations $w\in S_n$. The indeterminate $x_r$ will be interpreted as the value of the permutation $w$ in $r$. This represents the entry $w(r)$ of the column vector $v_r$.   Using the equivalence \eqref{eq5.1} we will prove that $\tilde{I}_h$ detects the fixed points of $\hes(N,h)$ for the $S$-action. Let $wB\in \hes(N,h)$ be a fixed point with column vectors $\{v_1, \ldots, v_n\}$. Since $wB$ satisfies the Insko's ideal of the Theorem \ref{thm1.1}, we can assume without loss of generality that for all $i$
\begin{align}\label{eq4.12}
\begin{aligned}
0&=d_{(1,\ldots, n),(v_1, \ldots v_{i},\ldots,v_{h(i)},Nv_i ,\ldots \widehat{v_{r}}, \ldots v_n)}
\end{aligned}
\end{align}
with $r=h(i)+1$. 
Starting with $v_1$,  if $v_1\neq e_1$ and $h\neq h_0$, where $h_0(i)=i$ for all $1\leq i\leq n$, we have
\begin{align}\label{cd4.1}
 Nv_1=v_{k_1}& \text{ with } w(1)-1=w(k_1)  \text{ and } 2\leq k_1\leq h(1).   
\end{align}
By \eqref{eq4.12}, Condition \eqref{cd4.1} is equivalence to considering the following determinant
\begin{align}\label{eq4.13}
\begin{aligned}
0&=d_{(1,\ldots, n),(v_1, v_2,\ldots,v_{h(1)},Nv_1 , \ldots v_n)}.
\end{aligned}
\end{align} 
Since $2\leq k_1 \leq h(1)$, for $v\neq e_1$ all solutions to conditions \eqref{cd4.1} and \eqref{eq4.13} are detected by the polynomial
\begin{align}
\begin{aligned}
\tilde{g}'_1&=(x_1-x_2 -1)\cdots (x_1-x_{h(1)} -1).
\end{aligned}
\end{align} 
Now, for $Nv_1=0$ or for $h_0$, both cases imply $v_1=e_1$. This condition is detected by the monomial $(x_1 -1)=0$. Hence we define
\begin{align}\label{eq4.14}
\begin{aligned}
\tilde{g}_1&=(x_1 -1)(x_1-x_2 -1)\cdots (x_1-x_{h(1)} -1).
\end{aligned}
\end{align}
For the next column vectors $v_i$ we do a similar analysis but now we need to consider the nesting condition also. This means, in addition to $Nv_i=v_{k_i}$, $1\leq k_i\leq h(i)$ we must also have $Nv_j=v_{\ell}$ with $1\leq \ell \leq h(i)$, $j\neq i$, and $j_{\ell}\leq j\leq i-1$ where $j_{\ell}$ is a largest integer such that $j_{\ell}<\ell$ , $h(j_{\ell}-1)<\ell$ and $\ell \leq h(j_{\ell})$. 
Now, observe that if $wB$ is such that $Nv_i=v_{k_i}$ with $j_{i}\leq k_i<i$, then the fixed point $\tau_{i, k_i}wB$ satisfies Condition \eqref{eq5.1} for the column vector $v_{k_i}$. In this case, it is enough to detect $wB$ or $\tau_{i,k_i}wB$ since it is equivalent to consider the image $Nv_i$ or the preimage $Nv_{\ell}=v_i$. Also, we observe that $\tau_{i,\ell}wB$ satisfies that $Nv_{\ell}=v_i$ with $\ell<i\leq h(\ell)$. Hence it is enough to consider the case $Nv_i=v_{k_i}$ with $i<k_i\leq h(i)$. Finally, $wB$ does not belong to $\hes(N,h)$ if and only if there is $i$ such that $Nv_i=k_i$ with $k_i>h(i)$. Then $\tau_{i,k_i}wB$ belongs to $\hes(N,h)$. Thus, all these fixed points are already included.  We summarize this information as follows:
\begin{enumerate}
    \item If $h=h_0$ and $Nv_i=v_{i-1}$, then $v_i=e_i$,
    \item If $h\neq h_0$, we have $Nv_i=v_{k_i}$ with $i+1\leq k_i \leq h(i)$ and
    \item for all $v_{\ell}$ such that $i\leq \ell \leq h(i)$, we have $Nv_{j}=v_{\ell}$ with $j_{\ell}\leq j\leq i-1$.
\end{enumerate}
The first condition is equivalent to $(x_1 -i)=0$. The remaining conditions are the next two determinants
\begin{align}\label{eq4.15}
\begin{aligned}
0&=d_{(1,\ldots, n),(v_1, v_2,\ldots,v_{h(i)},Nv_i , \ldots v_n)},
\end{aligned}
\end{align} 
and
\begin{align}
\begin{aligned}
0&=\prod_{\substack{Nv_j=v_{\ell}\\
i\leq\ell\leq h(i-1)}}\left( \prod_{j=j_{\ell}}^{i-1} {d_{(1,\ldots, n),(v_1, v_2,\ldots,v_{h(j)},Nv_j , \ldots v_n)}}\right).
\end{aligned}
\end{align} 
Hence, in general we have the determinant
\begin{align}
\begin{aligned}
\tilde{g}_i&=(x_i -i)\prod_{k=i+1}^{h(i)}(x_i-x_k -1)\cdot \prod_{\ell=i}^{h(i-1)}\left( \prod_{j=j_{\ell}}^{i-1} (x_j-x_{\ell}-1)\right).
\end{aligned}
\end{align} 
The polynomial $\tilde{g}_i$ allow us identify all possible images of $Nv_i$ and the preimages of $v_{\ell}$, $i\leq \ell\leq h(i)$.

Defining
\begin{equation}\label{eq4.21}
\tilde{I}_{h}=\langle \tilde{g_1},\ldots,\tilde{g_n}\rangle\subseteq \k[x_1,\ldots,x_n]
\end{equation}
it follows that $V(\tilde{I}_h)\cap \overline{S_n}$, where $\overline{S_n}=\lbrace \overline{w}=(w(1),\ldots, w(n)): w\in S_n\rbrace$ is the $S$-fixed point set of $\hes(N,h)$. By Theorem \ref{teo4}, we have 
\begin{equation}\label{eq5.13}
wB \in \hes(N,h)^S \Longleftrightarrow \overline{w}\in V(\tilde{I}_h)\cap \overline{S_n}. 
\end{equation}
This means, $\tilde{I}_h$ identifies the fixed points of $\hes(N,h)$. On the other hand, for the polynomials
\begin{equation}\label{eq4.26}
g_i=(x_i -it)\left(\prod_{k<i,j\leq i}(x_k -x_j-t)\right)\left(\prod_{j>i}(x_i-x_j -t)\right).
\end{equation} 
of $\k[\boldsymbol{x},t]=\k[x_1,\ldots,x_n,t]$, where $\k[t]$ is the Lie algebra of the torus $S$, 
the ideal of \cite[Algorithm 5.10]{phd}
$$I_h=\langle g_1,\ldots, g_n\rangle\subseteq \k[\boldsymbol{x},t]$$ 
satisfies that, if $x_i=w(i)t$ for $w\in S_n$, we have
\begin{equation}\label{eq4.24.1}
g_i=t\tilde{g}_i.
\end{equation}
Our goal is to give a different proof of \cite[Theorem 5.11]{phd} using the ideal $\tilde{I}_h$. By \cite[Chapter 5]{phd}
\begin{equation}\label{eq4.25}
H_{S}^*(\GL_n(\k)/B;\k)\simeq\k[{\boldsymbol{x}},t]/E_n({\boldsymbol{x}},t),
\end{equation} 
where $E_n({\boldsymbol{x}},t)={\langle e_i(x_1,\ldots,x_n)-e_i(t): 1\leq i\leq n\rangle}$ is the ideal generated by 
  the elementary symmetric functions $e_i(\boldsymbol{x})$ in the variables $x_1, \ldots, x_n$, and the elementary symmetric functions $e_i(t)=e_i(t^1,\ldots,t^n)$ in powers of $t$. 
The ideal $E_n({\bf{x}},t)$ detects the $S$-fixed points on $\GL_n(\k)/B$ by \cite[Theorem 3.1]{GM}. Moreover,
\begin{equation}\label{eq4.22}
\Spec(H^*(\GL_n(\k)/B;\k))\simeq \bigcup_{wB\in\GL_n(\k)/B} \fft_{w}\subset H_{2}^S(\GL_n(\k)/B;\k)\cong \k^n
\end{equation} 
with $\fft_w$  the line corresponding to the fixed point $wB \in \GL_n(\k)/B$. Since all zeros of $E_n(\boldsymbol{x},t)$ are of the form $wB$ with $w\in S_n$, all zeros of $E_n(\boldsymbol{x},t)+I_h$ are of the form $wB$ with $w\in S_n$. Then 
by \eqref{eq4.24.1}, $\overline{w}\in V(\tilde{I}_h)\cap \overline{S_n}$ if and only if $wB\in V(E_n(\boldsymbol{x},t)+I_h)$. Hence, by \eqref{eq5.13} $E_n(\boldsymbol{x},t)+I_h$ identifies $\hes(N,h)^S$. Since $\hes(N,h)$ is a $GM$-subspace of $\GL_n(\k)/B$ it follows that $\Spec(H_{S}^*(\hes(N,h);\k))$ is a subarrangement of $\Spec(H_{S}^*(\GL_n(\k)/B;\k)$. Thus, 
\begin{equation}\label{eq4.23}
\Spec(H_{S}^*(\hes(N,h);\k))\simeq \Spec\big(H_{S}^*(\GL_n(\k)/B;\k)/I\big)\simeq \bigcup_{wB\in \hes(N,h)}\!\!\!\!\fft_w
\end{equation} 
where $I=E_n(\boldsymbol{x},t)+I_h$ identifies the fixed points in $\hes(N,h)$ since
\begin{equation}
    wB\in \hes(N,h)^S \Leftrightarrow \overline{w}\in V(\tilde{I}_h \cap \overline{S_n}) \Leftrightarrow wB\in V(E_n(\boldsymbol{x},t)+I_h).
\end{equation} 
Thus, by  \eqref{eq4.23}, it follows
$$E_n({\bf{x}},t) + I_h=\bigcap_{wB\in \hes(N,h)} I(\fft_w).$$
We have proved:

\begin{thm}[{\cite[Theorem 5.11]{phd}}]\label{Insko2}
Let $I(\fft_w)$ denote the ideal of the line $\fft_w$ of \eqref{eq4.22}
for each $wB\in (\GL_n(\k)/B)^S$. Then,
$$E_n({\bf{x}},t) + I_h=\bigcap_{wB\in \hes(N,h)} I(\fft_w)$$
In the other words, the equivariant cohomology of the regular nilpotent Hessenberg variety $\hes(N,h)$ is
\begin{equation}\label{eq4.28}
\begin{aligned}
H_{S}^*(\hes(N,h);\k)\simeq \frac{\k[{\bf{x}},t]}{\bigcap_{wB\in \hes(N,h)}I(\fft_w)} \simeq \k[{\bf{x}},t]/(E_n({\bf{x}},t) + I_h).
\end{aligned}
\end{equation} 
\end{thm}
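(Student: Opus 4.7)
The plan is to show that both ideals in the claimed equality are the defining ideal of the same line arrangement
$$\bigcup_{wB\in \hes(N,h)^S} \fft_w \;\subset\; \Spec \k[\boldsymbol{x},t],$$
and then read off the cohomological presentation \eqref{eq4.28} from the GKM/GM description already assembled in \eqref{eq4.22}--\eqref{eq4.23}. This reduces the theorem to (a) computing the zero set $V(E_n(\boldsymbol{x},t)+I_h)$, and (b) promoting the resulting zero-set equality to an equality of ideals.

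First I would treat the right-hand side. Using the standing hypothesis that $H_S^*(\hes(N,h);\k)$ is generated in degree two, so that $\hes(N,h)$ is a $GM$-subspace of $\GL_n(\k)/B$, combine \eqref{eq4.22} and \eqref{eq4.25} to identify $\Spec H_S^*(\hes(N,h);\k)$ with the subarrangement $\bigcup_{wB\in \hes(N,h)^S}\fft_w$. Each $\fft_w$ is an irreducible line, so each $I(\fft_w)$ is prime and the intersection $\bigcap I(\fft_w)$ is already radical. Moreover, the Goresky--MacPherson restriction morphism embeds $H_S^*(\hes(N,h);\k)$ into $\prod_{wB\in \hes(N,h)^S}\k[t]$, so this ring is reduced; hence whatever presentation we write as a quotient of $\k[\boldsymbol{x},t]$ must be by a radical ideal.

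Next I would compute $V(E_n(\boldsymbol{x},t)+I_h)$. Every zero of $E_n(\boldsymbol{x},t)$ lies on some line $\fft_w$ cut out by $x_i=w(i)t$ for a permutation $w\in S_n$. Substituting $x_i=w(i)t$ into $g_i$ and applying the identity $g_i=t\tilde{g}_i$ of \eqref{eq4.24.1}, the vanishing of $g_i$ along $\fft_w$ becomes $\tilde{g}_i(\overline{w})=0$. By the construction of $\tilde{I}_h$ and the equivalence \eqref{eq5.13} (which in turn rests on Corollary \ref{cor4.5} and part (1) of Theorem \ref{teo4}), the simultaneous vanishing of all $\tilde{g}_i$ at $\overline{w}$ is equivalent to $wB\in \hes(N,h)^S$. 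Therefore
$$\fft_w\subseteq V(E_n(\boldsymbol{x},t)+I_h)\ \Longleftrightarrow\ wB\in \hes(N,h)^S,$$
which gives the desired identification $V(E_n(\boldsymbol{x},t)+I_h)=\bigcup_{wB\in \hes(N,h)^S}\fft_w$.

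Finally I would combine the two steps. Both $E_n(\boldsymbol{x},t)+I_h$ and $\bigcap I(\fft_w)$ have the same vanishing locus, and by the first step the quotient $\k[\boldsymbol{x},t]/(E_n(\boldsymbol{x},t)+I_h)$ is reduced (being identified cohomologically with a subring of a product of polynomial rings), so $E_n(\boldsymbol{x},t)+I_h$ is radical. The Nullstellensatz then forces
$$E_n(\boldsymbol{x},t)+I_h=\bigcap_{wB\in \hes(N,h)}I(\fft_w),$$
and substituting into \eqref{eq4.23} gives \eqref{eq4.28}. The main obstacle is precisely the radicality promotion at the last step; a purely algebraic verification that the generators of $I_h$ cut the arrangement transversely would be substantially harder, and using the $GM$-subspace hypothesis to deduce reducedness cohomologically seems to be the only clean path.
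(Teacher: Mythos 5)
Your proposal follows essentially the same route as the paper: both arguments reduce the theorem to (i) computing that $V(E_n(\boldsymbol{x},t)+I_h)=\bigcup_{wB\in \hes(N,h)^S}\fft_w$ by restricting to the lines $x_i=w(i)t$, applying the identity $g_i=t\tilde g_i$ of \eqref{eq4.24.1} and the fixed-point criterion \eqref{eq5.13}, and (ii) using the standing $GM$-subspace hypothesis together with \eqref{eq4.22}--\eqref{eq4.23} to identify $\Spec H_S^*(\hes(N,h);\k)$ with that subarrangement. So in structure and in the key inputs you match the paper exactly.

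The one point where you go beyond the paper is the promotion from equality of zero sets to equality of ideals, which you correctly single out as the crux; the paper passes over this step silently. However, your proposed justification is circular: you infer that $\k[\boldsymbol{x},t]/(E_n(\boldsymbol{x},t)+I_h)$ is reduced from its ``cohomological identification'' with a subring of $\prod_{w}\k[t]$, but that identification is precisely the conclusion \eqref{eq4.28} being proved. What your steps actually establish is that $\bigcap_{wB\in\hes(N,h)}I(\fft_w)$ is radical (trivially, as an intersection of primes) and, via the Nullstellensatz, that $E_n(\boldsymbol{x},t)+I_h\subseteq\bigcap_{wB\in\hes(N,h)}I(\fft_w)$ with equality of radicals; the reverse containment still requires an independent argument that $E_n(\boldsymbol{x},t)+I_h$ is radical (or, alternatively, a dimension or Hilbert-series comparison between the two quotients, which is closer to what Insko's original proof supplies). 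Since the paper's own write-up makes the same unjustified jump, your attempt is not weaker than the source, but the gap you flag is real and your closing sentence does not close it.
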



\begin{thebibliography}{9}
\bibitem{GM} Goresky, M., MacPherson, R., {\it On the Spectrum of the Equivariant Cohomology Ring}, Canad. J. Math. Vol. 62 (2), 262--283, 2010.

\bibitem{phd} Insko, E. {\it Equivariant cohomology and local invariants of Hessenberg varieties}, Ph. D. Thesis, The University of Iowa, July 2012, UMI Number 3526830.

\bibitem{Insko} Insko, E., Tymoczko, J., Woo, A., {\it A formula for the cohomology and K-class of a regular Hessenberg variety}, Journal of Pure and Applied Algebra 224, 106230, 2020.

\bibitem{Proc} de Mari, F.,  Procesi, C., Shayman, M. A., {\it Hessenberg varieties}, Transactions of the American Mathematical Society 332, No. 2, 529--534, 1992.

\bibitem{Tym1} Tymoczko, J., {\it An introduction to equivariant cohomology and homology, following Goresky, Kottwitz, and MacPherson}, Contemporary Mathematics 388, 169--188, 2005.

\bibitem{Tym2} Tymoczko, J., {\it Permutation representations on Schubert varieties}, American Journal of Mathematics 130, 1171--1194, 2008.

\end{thebibliography}
\end{document}